\documentclass[11pt,reqno]{article}
\usepackage{caption}
\usepackage{subcaption}
\usepackage{graphicx,amsmath,amssymb,setspace,color,xcolor}
\usepackage{bm,bbm}

\usepackage{srcltx, soul, cancel}
\usepackage{amsthm}
\usepackage[normalem]{ulem}

\textwidth 17cm \textheight 22.7cm \oddsidemargin 0.0cm

\newcommand{\be}{\begin{eqnarray}}
\newcommand{\ee}{\end{eqnarray}}
\newcommand{\by}{\begin{eqnarray*}}
\newcommand{\ey}{\end{eqnarray*}}
\newcommand{\en}{\end{enumerate}}
\newcommand{\bi}{\begin{itemize}}
\newcommand{\ei}{\end{itemize}}

\newtheorem{lemma}{Lemma}[section]

\newtheorem{remark}{Remark}[section]

\newtheorem{proposition}{Proposition}[section]
\newtheorem{example}{Example}[section]
\newtheorem{thm}{Theorem}[section]

\newcommand \la {\lambda}
\newcommand \tet {\theta}

\newcommand \gam {\gamma}
\newcommand \gamo {\gam_1}
\newcommand \gamt {\gam_2}
\newcommand \eps {\varepsilon}
\newcommand \del {\delta}

\newcommand \ome {\omega}

\newcommand \R {\mathbb{R}}
\newcommand \N {\mathbb{N}}
\newcommand \E {\mathbb{E}}

\newcommand \lan {\la_n}
\newcommand \tetn {\tet_n}
\newcommand \cn {c_n}
\newcommand \Yn {Y_n}
\newcommand \Fn {F_n}
\newcommand \Gn {G_n}

\newcommand \gn {g_n}

\newcommand \Vn {V_n}
\newcommand \Vdn {V_{D,n}}
\newcommand \VD {V_D}
\newcommand \bD {b_D}
\newcommand \bn {b_n}

\newcommand \sqrtn {\sqrt{n}}

\newcommand \mO {\mathcal{O}}
\newcommand \mC {\mathcal{C}}

\renewcommand{\theequation}{\arabic{section}.\arabic{equation}}

\numberwithin{equation}{section}


\newcommand{\sredm}[1]{\ifmmode\text{\xout{\ensuremath{\displaystyle \textcolor{red}{#1}}}}\else\sout{\textcolor{red}{#1}}\fi}


\begin{document}

\title{Optimal Dividend Problem: Asymptotic Analysis\thanks{This is the final version of the paper. To appear in {\it SIAM Journal on Financial Mathematics}}.}

\author{Asaf Cohen\footnote{Department of Mathematics, University of Michigan, Ann Arbor, Michigan 48109, USA, 
shloshim@gmail.com,
https://sites.google.com/site/asafcohentau/. The research of A. Cohen is supported by the National Science Foundation (DMS-2006305)
}
\and
Virginia R. Young\footnote{Department of Mathematics, University of Michigan, Ann Arbor, Michigan 48109, USA, vryoung@umich.edu.  V. R. Young thanks the Cecil J. and Ethel M. Nesbitt Chair of Actuarial Mathematics for partial financial support.}}

\maketitle

\begin{abstract}
We re-visit the classical problem of optimal payment of dividends and determine the degree to which the diffusion approximation serves as a valid approximation of the classical risk model for this problem.  Our results parallel some of those in B\"auerle \cite{B2004}, but we obtain sharper results because we use a different technique for obtaining them.  Specifically, B\"auerle \cite{B2004} uses probabilistic techniques and relies on convergence in distribution of the underlying processes.  By contrast, we use comparison results from the theory of differential equations, and these methods allow us to determine the rate of convergence of the value functions in question.

\medskip

{\bf  Keywords}:  Optimal dividend strategy, Cram\'er-Lundberg risk process, diffusion approximation, approximation error.

\medskip

{\bf AMS 2010 Subject Classification:} Primary: 91B30, 91G80, 93E20. Secondary: 45J05, 60G99, 90B20. 

\medskip

{\bf JEL Classification:} G22, C60.

\end{abstract}


\section{Introduction}

\setcounter{equation}{0}
\renewcommand{\theequation}
{1.\arabic{equation}}

A long-standing problem in insurance mathematics is optimal payment of dividends; see, for example, the survey by Avanzi \cite{A2009}.  In this paper, we are concerned about the degree to which the diffusion approximation serves as a valid approximation of the classical risk model when optimizing dividend payments.  Gerber, Shiu, and Smith \cite{GSS2008} address approximations to the dividend problem.  Also, B\"auerle \cite{B2004} considers the scaled dividend problem and proves that, as the scaling factor increases without bound, the value function converges to the one under the diffusion approximation.  

We, now, compare the model and techniques used and the results obtained by B\"auerle \cite{B2004} with the corresponding items in our work.  B\"auerle \cite{B2004} considers the case for which the rate of dividend payments is bounded, which corresponds to a classical continuous-time control problem.  While we use the same diffusion scaling, we, on the other hand, do not restrict the dividend rate to be bounded, which leads to a singular control problem.  Moreover, B\"auerle \cite{B2004} uses probabilistic techniques and relies on convergence in distribution of the underlying processes under Skorokhod's J1 topology. This procedure has two main components: (1) showing that any limit point on any arbitrary sequence of controls in the pre-limit problem does not perform better than the value function, and (2) showing that a candidate control for the pre-limit problem attains the value function.  For the first component, the proof uses tightness arguments, heavily utilizing the continuity of the underlying processes and the uniform boundedness of the control.  Due to the singular control considered here, the compactness and tightness arguments used by B\"auerle \cite{B2004} are not valid for our work.\footnote{To bypass this issue, one may use the {\it time-stretching method}, introduced  by Meyer and Zheng in \cite{MZ1984} and extensively used by Kurtz \cite{Kurtz1991, KuR1990Martingale, Kur1991Control}, Kushner and Martins \cite{Martins1990, Kushner1991}, Budhiraja \cite{BG2006, bud-ros2006, bud-ros2007}, Costantini and Kurtz \cite{Cos-Kur2018, Cos-Kur2019}, and Cohen \cite{coh2019, coh2019bro}.  The basic idea of this method is that time is stretched in accordance with the singular controls, so that the stretched underlying processes are uniformly Lipschitz continuous. Hence, tightness is attained under the J1 topology.  Finally, time is shrunk in accordance with the limiting stretched control. The payoff/cost of the shrunken processes are, then, compared with the value function.  Recently, Cohen \cite{COHEN_stretch} showed that by working with the weak-M1 topology instead of the J1topology, the time-transformations are unnecessary since they are embedded in the definition of the parametric representation, which define the weak-M1 topology. Hence, one may pursue a probabilistic proof under this topology to get convergence. However, this result does not provide a rate of convergence.}  By contrast, we rely on the closed-form expression of the limiting value function and use comparison results from the theory of differential equations; these methods allow us to determine the rate of convergence of the value functions in question; see Theorem \ref{thm:Vn_lim}.  Another issue we address is the rate of convergence of the difference between the value function in the pre-limit problem and the payoff function in the pre-limit problem using the optimal threshold from the limiting problem; see Theorem \ref{thm:Vdn_VD}. The importance of this result stems from the fact that the latter threshold admits a closed form, unlike for the pre-limit problem. Finally, B\"auerle \cite{B2004} also includes proportional reinsurance, but we omit reinsurance in the interest of simplicity.

The background for the comparison principle we are using is introduced in Cohen and Young \cite{CY2020}.  In that paper, the authors provide the rate of convergence of the probability of ruin in the Cram\'er--Lundberg model to its diffusion approximation. The present paper shows that this method can be elevated from the uncontrolled problem to an optimal control problem, which on top of this is a {\it singular} control problem.  Additionally, the comparison principles enables us to compare the value functions for different policies.

The remainder of the paper is organized as follows.  In Section \ref{sec:back}, we present the Cram\'er-Lundberg (CL) model and state results from Azcue and Muler \cite{AM2005} that we use to bound our value function.  Then, in Section \ref{sec:asymp}, we scale the CL model and show that, as the scaling factor increases without bound, the resulting value function converges to the one under the diffusion approximation, and we determine the rate of that convergence.  In that section, we also show that, if the insurer uses the optimal strategy under the diffusion approximation but for the scaled CL risk model, then doing so is $\eps$-optimal, and we specify the order of $\eps$ relative to the scaling factor.

\section{Classical risk model and Azcue and Muler \cite{AM2005}}\label{sec:back}

\setcounter{equation}{0}
\renewcommand{\theequation}
{2.\arabic{equation}}

\subsection{Cram\'er-Lundberg model with dividends}\label{sec:model}

Consider an insurer whose surplus process $U = \{ U_t \}_{t \ge 0}$ before paying dividends is described by a Cram\'er-Lundberg (CL) model, that is, the insurer receives premium income at a constant rate $c$ and pays claims according to a compound Poisson process.  Specifically,
\begin{equation}\label{eq:U}
U_t = x + ct - \sum_{i=1}^{N_t} Y_i,
\end{equation}
in which $U_0 = x \ge 0$ is the initial surplus, $N = \{N_t \}_{t \ge 0}$ is a homogeneous Poisson process with intensity $\la > 0$, and the claim sizes $Y_1, Y_2, \ldots$ are independent and identically distributed, positive random variables, independent of $N$.  All random variables are defined on a common probability space $\big( \Omega, \mathcal{F}, \mathbb{P} \big)$, with the natural filtration $\mathbb{F} = \{ \mathcal{F}_t \}_{t \ge 0}$ induced by the random variables.

Let $F_Y$ denote the common cumulative distribution function of $\{Y_i\}_{i \in \N}$, and assume that $Y$ has finite moment generating function $M_Y(s) = \E \big(e^{Ys} \big)$ for $s$ in a neighborhood of $0$; thus, $\E \big( Y^k \big) < \infty$ for $k = 1, 2, \dots$.  Finally, assume that the premium rate $c$ satisfies $c > \la \E Y$, and write $c = (1 + \tet) \la \E Y$, with positive relative risk loading $\tet > 0$.

The insurer pays dividends to its shareholders according to a process $D = \{ D_t \}_{t \ge 0}$, in which $D_t$ equals the cumulative dividends paid on or before time $t$, with $D_{0-} = 0$.  A dividend strategy $D$ is {\it admissible} if $D$ is non-decreasing and is predictable with respect to the filtration $\mathbb{F}$.

The surplus process $X = \{ X_t \}_{t \ge 0}$ after paying dividends is given by
\begin{equation}\label{eq:X}
X_t = x + ct - \sum_{i=1}^{N_t} Y_i - D_t,
\end{equation}
in which $X_0 = x \ge 0$ is the initial surplus.  Define the time of ruin $\tau$ by
\begin{equation}\label{eq:tau}
\tau = \inf\{ t\ge 0: X_t < 0\}.
\end{equation}
The insurer seeks to maximize the expected payoff of discounted dividends between now and the time of ruin, with corresponding value function $V$ defined by
\begin{equation}\label{eq:V}
V(x) = \sup_D \E \Bigg[ \int_0^\tau e^{- \del t} dD_t \, \Bigg| \, X_0 = x \Bigg],
\end{equation}
in which $\del > 0$ is the discount rate, and the supremum is taken over admissible dividend strategies.

Gerber \cite{G1969} shows that the optimal dividend strategy for the problem in \eqref{eq:V} is a {\it band} strategy.   A band strategy reduces to a {\it barrier} strategy if the initial surplus is less than the lowest band or if claim sizes are exponentially distributed.  From Theorem 2.45 of Schmidli \cite{S2008}, the value function $V$ is the minimal non-negative solution of the following integro-differential variational inequality on $\R^+$:
\begin{equation}\label{eq:IDeq}
\min \left[ (\la + \del) v(x) - c v_x(x) - \la \displaystyle \int_0^x v(x - y) dF_Y(y), \, v_x(x) - 1 \right] = 0.
\end{equation}
Furthermore, Theorem 2.39 of Schmidli \cite{S2008} states that $V$ is differentiable from the left and from the right on $(0, \infty)$, and \eqref{eq:IDeq} holds separately for both left- and right-derivatives.

\subsection{Results from Azcue and Muler \cite{AM2005}}\label{sec:AM2005}

We look for bounds for the value function $V$ as sub- and supersolutions of \eqref{eq:IDeq}, after we scale the CL model in Section \ref{sec:scale}.  To that end, define the operator $F$, acting on $u \in \mathcal{C}^1 \big(\R^+ \big)$, by the variational inequality in \eqref{eq:IDeq}, that is,
\begin{equation}\label{eq:F}
F \big(x, u(x), u_x(x), u(\cdot) \big) = \min \left[ (\la + \del) u(x) - c u_x(x) - \la \displaystyle \int_0^x u(x - y) dF_Y(y), \, u_x(x) - 1 \right].
\end{equation}
We say that a function $u \in \mathcal{C}^1 \big(\R^+ \big)$ is a {\it subsolution} of $F = 0$ if 
\[
F \big(x, u(x), u_x(x), u(\cdot) \big) \le 0,
\]
for all $x \ge 0$.  Similarly, we say that a function $v \in \mathcal{C}^1 \big(\R^+ \big)$ is a {\it supersolution} of $F = 0$ if
\[
F \big(x, v(x), v_x(x), v(\cdot) \big) \ge 0,
\]
for all $x \ge 0$.

We state results from Sections 4 and 5 of Azcue and Muler \cite{AM2005} as they apply to the model in this paper.  They state their results for viscosity sub- and supersolutions because they control surplus via reinsurance; however, their results also apply to our no-reinsurance model with classical sub- and supersolutions.  First, Azcue and Muler \cite{AM2005} prove a comparison result for functions that satisfy the following conditions:

\begin{enumerate}
\item{}  $u: \R^+ \to \R$ is locally Lipschitz.
\item{} If $0 \le x < y$, then $u(y) - u(x) \ge y - x$.
\item{} There exists a constant $k > 0$ such that $u(x) \le x + k$ for all $x \ge 0$.
\end{enumerate}

\noindent They note that the value function in \eqref{eq:V} satisfies these three conditions.

Proposition 4.2 of Azcue and Muler \cite{AM2005} shows that if $u$ is a subsolution and if $v$ is a supersolution of $F = 0$, both satisfying Conditions 1, 2, and 3, with $u(0) \le v(0)$, then $u \le v$ on $\R^+$.  Because the value function is a solution of $F = 0$ and, hence, a supersolution of $F = 0$, we will use this result in Section \ref{sec:error} to find a lower bound of the value function.

Proposition 5.1 of Azcue and Muler \cite{AM2005} shows that if $v$ is an absolutely continuous supersolution of $F = 0$ satisfying Condition 3, then $V \le v$ on $\R^+$.   We will use this result in Section \ref{sec:error} to find an upper bound of the value function.

\section{Asymptotic analysis}\label{sec:asymp}

\setcounter{equation}{0}
\renewcommand{\theequation}
{3.\arabic{equation}}

\subsection{Scaled model and diffusion approximation}\label{sec:scale}

Next, we scale the CL model by $n > 0$, as in Cohen and Young \cite{CY2020}.  In the scaled system, define $\lan = n \la$, so $n$ large is essentially equivalent to $\la$ large. Scale the claim severity by defining $\Yn = Y/\sqrtn$; thus, the variance of total claims during $[0, t]$ is invariant under the scaling, that is, $\lan \E \big(\Yn^2 \big) = \la \E \big(Y^2 \big)$ for all $n > 0$.  Finally, define the premium rate by $\cn = c + (\sqrtn - 1) \la \E Y$; thus, $\cn - \lan \E \Yn = c - \la \E Y$ is also invariant under the scaling.  We can also write $\cn = (\sqrtn + \tet) \la \E Y$, in which $c = (1 + \tet) \la \E Y$; moreover, we can write $\cn = (1 + \tetn) \lan \E \Yn$, in which $\tetn = \tet/\sqrtn$.  The diffusion approximation of the scaled surplus process before dividends is, therefore, 
\begin{equation}\label{eq:diff_approx}
\big( \cn - \lan \E \Yn \big)dt + \sqrt{\lan \E \big(\Yn^2 \big)} \, dB_t = \big( c - \la \E Y \big)dt + \sqrt{\la \E \big(Y^2 \big)} \, dB_t = \tet \la \E Y dt + \sqrt{\la \E \big(Y^2 \big)} \, dB_t,
\end{equation}
for some standard Brownian motion $B = \{ B_t \}_{t \ge 0}$.  Note that the diffusion approximation of the scaled CL model is {\it independent} of $n$.  See Iglehart \cite{I1969}, B\"auerle \cite{B2004}, Gerber, Shiu, and Smith \cite{GSS2008}, and Schmidli \cite{S2017} for more information about this scaling.

Let $\Vn$ denote the value function under the scaled CL model.   We wish to bound $\Vn$ by modifying $\VD$ via functions of order $\mO \big( n^{-1/2} \big)$, in which $\VD$ is the value function when uncontrolled surplus follows the diffusion approximation in \eqref{eq:diff_approx}.  Thus, first, we digress to compute $\VD$, which uniquely solves the following free-boundary problem:
\begin{equation}\label{eq:FBP_diff}
\begin{cases}
\del v(x) = \tet \la \E Y v_x(x) + \dfrac{1}{2} \, \la \E \big(Y^2 \big) v_{xx}(x), \qquad 0 \le x \le \bD, \\
v(0) = 0, \quad v_x(\bD) = 1, \quad v_{xx}(\bD) = 0,
\end{cases}
\end{equation}
with $\VD(x) = \VD(\bD) + (x - \bD)$ for $x > \bD$.  See Gerber and Shiu \cite{GS2004} for a thorough analysis of $\VD$'s problem.  Via a straightforward application of techniques from ordinary differential equations, we obtain
\begin{equation}\label{eq:VD}
\VD(x) = 
\begin{cases}
\dfrac{e^{\gamo x} - e^{-\gamt x}}{\gamo e^{\gamo \bD} + \gamt e^{-\gamt \bD}} \, , &\quad 0 \le x \le \bD, \vspace{1ex} \\
\VD(\bD) + (x - \bD), &\quad x > \bD,
\end{cases}
\end{equation}
in which $0 < \gamo < \gamt$ are given by
\begin{equation}\label{eq:gam1}
\gamo = \dfrac{1}{\la \E \big(Y^2 \big)} \left[ - \tet \la \E Y + \sqrt{\big( \tet \la \E Y \big)^2 + 2 \del \la \E \big( Y^2 \big)} \, \right],
\end{equation}
and
\begin{equation}\label{eq:gam2}
\gamt =  \dfrac{1}{\la \E \big(Y^2 \big)} \left[ \tet \la \E Y + \sqrt{\big( \tet \la \E Y \big)^2 + 2 \del \la \E \big( Y^2 \big)} \, \right] ,
\end{equation}
and the free boundary $\bD$ equals
\begin{equation}\label{eq:bD}
\bD = \dfrac{2}{\gamo + \gamt} \, \ln \left( \dfrac{\gamt}{\gamo} \right).
\end{equation}
By using the expression for $\bD$ in \eqref{eq:bD}, we rewrite $\VD$ as follows:
\begin{equation}\label{eq:VD_1}
\VD(x) =
\begin{cases}
\dfrac{1}{\gamo + \gamt} \left( \dfrac{\gamo}{\gamt} \right)^{\frac{\gamo - \gamt}{\gamo + \gamt}} \Big( e^{\gamo x} - e^{-\gamt x} \Big), &\quad 0 \le x \le \bD, \vspace{2ex} \\
\dfrac{\tet \la \E Y}{\del} + \big( x - \bD \big), &\quad x > \bD.
\end{cases}
\end{equation}
From the second line in \eqref{eq:VD_1}, we observe
\begin{equation}\label{eq:VD_bD}
\VD \big( \bD \big) = \dfrac{\tet \la \E Y}{\del} \, ,
\end{equation}
the present value of a continuous perpetuity, discounted at rate $\del$, paying at the rate $\tet \la \E Y$, the risk loading in the premium.  Also, $V_D(x) > x$ for all $x > 0$, from which it follows that
\begin{equation}\label{eq:bD_ineq}
\dfrac{\tet \la \E Y}{\del} > \bD,
\end{equation}
an inequality that will be useful later.

\begin{remark}
Because the diffusion in \eqref{eq:diff_approx} approximates the CL risk process in \eqref{eq:U} with $\la$, $Y$, and $c$ replaced by $\lan$, $\Yn$, and $\cn$, respectively, researchers often say that $\VD$ approximates $\Vn$. In Theorem {\rm \ref{thm:Vn_lim}} in the next section, we quantify the degree to which $\VD$ approximates $\Vn$.  \qed
\end{remark}




\subsection{Approximating $\Vn$ by $\VD$ to order $\mO \big( n^{-1/2} \big)$}\label{sec:error}

In this section, we bound $\Vn$ by modifying $\VD$ via functions of order $\mO \big( n^{-1/2} \big)$ and by using Propositions 4.2 and 5.1 of Azcue and Muler \cite{AM2005}, as they apply to the scaled problem.  Note that $\VD$ plus or minus a constant satisfies the three conditions of Azcue and Muler \cite{AM2005} that we list in Section \ref{sec:AM2005}.

Throughout this section, let $\Fn$ denote the operator in \eqref{eq:F}, with $c$, $\la$, and $Y$ replaced by $\cn$, $\lan$, and $\Yn$, respectively, and write $\Fn$ as follows:
\begin{align}\notag
\Fn \big(x, u(x), u_x(x), u(\cdot) \big) = \min \Big\{\Gn \big(x, u(x), u_x(x), u(\cdot) \big), \, u_x(x) - 1 \Big\},
\end{align}
in which the operator $\Gn$ is as defined by
\begin{align}
\Gn \big(x, u(x), u_x(x), u(\cdot) \big) =
(n \la + \del)u(x) - \la \big(\sqrtn + \tet \big) \E Y \, u_x(x) - n \la \int_0^{\sqrtn x} u \Big(x - \frac{t}{\sqrtn}\Big) dF_Y(t).
\label{eq:Gn_def}
\end{align}
Recall that $\Fn$ evaluated at the value function $\Vn$ is identically $0$.

In the next proposition, we modify $\VD$ by a constant of order $\mO \big( n^{-1/2} \big)$ to obtain a lower bound of $\Vn$.  In Appendix \ref{app:A}, we present the background calculation that inspired this bound.

\begin{proposition}\label{prop:lower_bnd}
Assume there exists $\eps > 0$ such that
\begin{equation}\label{eq:Y3_bnd}
\E\big(e^{\eps Y}\big) < \infty.
\end{equation}
Then, there exists $q = q(\eps) > 0$ and $N = N(\eps)>0$, such that, for all $n > N$ and all $x \ge 0$,
\begin{align}\label{eq:lower_bound_Vn}
\VD(x) - \dfrac{q}{\sqrtn} \le \Vn(x).
\end{align}
\end{proposition}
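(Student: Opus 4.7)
The plan is to apply Proposition 4.2 of \cite{AM2005} (stated in Section \ref{sec:AM2005}) with the subsolution candidate $u(x) := \VD(x) - q/\sqrtn$ and the supersolution $\Vn$, which solves $\Fn(\Vn)=0$. The three Azcue--Muler conditions follow for $u$ from the explicit form \eqref{eq:VD_1} of $\VD$ (local Lipschitz continuity, slope $\ge 1$ since $\VD'\ge 1$ everywhere, and linear growth), and the boundary condition $u(0) = -q/\sqrtn \le 0 = \Vn(0)$ is immediate. Since $u_x(x) = \VD'(x) = 1$ for $x > \bD$, the $u_x - 1$ branch of $\Fn$ vanishes there and $\Fn(u)(x) \le 0$ automatically. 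Hence the substance of the proof lies in verifying $\Gn(u)(x) \le 0$ on $[0,\bD]$, where $\VD' \ge 1$. A direct substitution gives
\by
\Gn(u)(x) &=& \Gn(\VD)(x) - \frac{\del q}{\sqrtn} - \sqrtn\,\la q\,\bar F_Y(\sqrtn x),
\ey
so the task reduces to bounding $\Gn(\VD)(x) \le \del q/\sqrtn + \sqrtn\,\la q\,\bar F_Y(\sqrtn x)$ on $[0,\bD]$.

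To analyze $\Gn(\VD)$, I would Taylor-expand $\VD(x - t/\sqrtn)$ to third order around $x$ inside the integral defining $\Gn$ in \eqref{eq:Gn_def}; $\VD$ is $C^\infty$ on $[0,\bD]$ by \eqref{eq:VD_1}. The zeroth-, first-, and second-order coefficients combine with the drift, premium, and discount pieces of $\Gn(\VD)$ and cancel thanks to the ODE in \eqref{eq:FBP_diff}, producing
\by
\Gn(\VD)(x) &=& n\la\,\VD(x)\,\bar F_Y(\sqrtn x) - \sqrtn\,\la\,\VD'(x)\,T_1(\sqrtn x)\\
&& +\; \tfrac{1}{2}\la\,\VD''(x)\,T_2(\sqrtn x) + \tfrac{\la}{6\sqrtn}\,R_3(x),
\ey
where $T_k(a) := \int_a^\infty t^k\,dF_Y(t)$ and $|R_3(x)| \le \|\VD'''\|_{L^\infty([0,\bD])}\,\E(Y^3) < \infty$. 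The third term is non-positive by concavity of $\VD$ on $[0,\bD]$ (which provides extra room), and the fourth is uniformly $O(1/\sqrtn)$ and absorbed into $\del q/\sqrtn$ as soon as $q \ge \la\,\|\VD'''\|_\infty\,\E(Y^3)/(6\del)$.

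The main obstacle is the first two terms: individually the jump-ruin term $n\la\,\VD(x)\,\bar F_Y(\sqrtn x)$ is $O(\sqrtn)$ (peaking near $x \sim 1/(\eps\sqrtn)$) and cannot be controlled without its neighbor. Using the Markov-type inequality $T_1(\sqrtn x) \ge \sqrtn x\,\bar F_Y(\sqrtn x)$ together with the concavity estimate $0 \le \VD(x) - \VD'(x)\,x \le \tfrac{1}{2}\|\VD''\|_\infty x^2$ on $[0,\bD]$, the two terms telescope into
\by
n\la\,\VD(x)\,\bar F_Y(\sqrtn x) - \sqrtn\,\la\,\VD'(x)\,T_1(\sqrtn x) &\le& \tfrac{1}{2}\la\,\|\VD''\|_\infty (\sqrtn x)^2\,\bar F_Y(\sqrtn x)
\ey
modulo non-positive pieces. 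Setting $u := \sqrtn x$ and invoking the Chernoff bound $\bar F_Y(u) \le \E(e^{\eps Y})\,e^{-\eps u}$ from \eqref{eq:Y3_bnd}, the map $u \mapsto u^2\,\bar F_Y(u)$ is bounded on $[0,\infty)$ and, once $u$ exceeds $2/\eps$, is strictly decreasing in $u$, hence decays faster than any polynomial in $n$ once $u \gtrsim n^{1/4}$. The argument closes with a dichotomy on $u$: on the set $\{u \le n^{1/4}\sqrt{2q/\|\VD''\|_\infty}\}$ the inequality $\tfrac{1}{2}\la\|\VD''\|_\infty u^2 \le \sqrtn\,\la q$ holds pointwise, so the telescoped bound is dominated by $\sqrtn\,\la q\,\bar F_Y(u)$; on its complement, $u^2\,\bar F_Y(u) = o(n^{-k})$ for every $k$, so the telescoped bound is absorbed into $\del q/\sqrtn$. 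Picking $q = q(\eps) > 0$ as a sufficiently large constant depending on $\eps$, the model parameters $\la,\tet,\E(Y^j)$ for $j\le 3$, and $\|\VD''\|_\infty$, $\|\VD'''\|_\infty$, and then $n \ge N(\eps)$ large enough, we obtain $\Fn(u) \le 0$ on $\R^+$, and Proposition 4.2 of \cite{AM2005} yields $\VD - q/\sqrtn \le \Vn$.
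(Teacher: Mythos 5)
Your proof is correct, but it departs from the paper's argument in a substantive way. Both proofs open identically: invoke Proposition 4.2 of Azcue--Muler, note $u(0)=-q/\sqrtn\le \Vn(0)$, observe that the $u_x-1$ branch of $\Fn$ handles $x>\bD$, and use linearity of $\Gn$ to reduce the claim to bounding $\Gn(\VD)(x)\le \tfrac{q}{\sqrtn}\big(\del+n\la\bar F_Y(\sqrtn x)\big)$ on $[0,\bD]$. The divergence is in how $\Gn(\VD)$ is controlled. The paper (Appendix~\ref{app:A}) extends $\VD$ to negative arguments by the same sum of exponentials, splits the truncated integral into $\int_0^\infty$ minus a remainder that is nonpositive because the extension is negative there, and then evaluates $\int_0^\infty$ exactly using the integral-form Taylor remainder for $e^z$ (identity \eqref{eq:ex}). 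This yields the \emph{uniform} bound $\Gn(\VD)(x)\le \la A/\sqrtn$ of \eqref{eq:Gn_bnd}, after which the $n\la\bar F_Y$ contribution from $\Gn(x,1,0,1)$ is simply discarded and $q=\la A/\del$ suffices. Your proof keeps the truncated integral, Taylor-expands $\VD$ in place with the Lagrange remainder, cancels the smooth part via the ODE in \eqref{eq:FBP_diff}, and is left with the tail terms $n\la\VD\bar F_Y - \sqrtn\la\VD'T_1 + \tfrac12\la\VD'' T_2 + O(n^{-1/2})$. Because $n\la\VD(x)\bar F_Y(\sqrtn x)$ alone is $O(\sqrtn)$ near $x\sim n^{-1/2}$, your route cannot deliver a uniform $O(n^{-1/2})$ bound on $\Gn(\VD)$; even after the Markov/concavity telescoping the quantity $\tfrac12\la\|\VD''\|_\infty\,u^2\bar F_Y(u)$ is $O(1)$ near the peak of $u^2\bar F_Y(u)$. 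This is why your dichotomy and the retention of the $\sqrtn\la q\bar F_Y(u)$ term are genuinely necessary, not decorative. The trade-off: your approach is more elementary and self-contained (no exponential extension, no identity \eqref{eq:ex}, the moment condition enters only through a Chernoff bound and $\E(Y^3)<\infty$), at the cost of a two-regime case analysis; the paper's approach is shorter and cleaner by exploiting the explicit exponential form of $\VD$, but it obscures that the $n\la S_Y$ term can be load-bearing. Minor details you glossed over but which do hold: $\VD''<0$ on $[0,\bD)$ with $\VD''(\bD)=0$, so concavity on $[0,\bD]$ is correct; and $\xi_t\in[0,\bD]$ since $t\le\sqrtn x$, so $\|\VD'''\|_{L^\infty([0,\bD])}$ is the right norm.
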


\begin{proof}
First, note that
\begin{equation}\label{eq:order0}
\VD(0) - \dfrac{q}{\sqrtn} = - \, \dfrac{q}{\sqrtn} < 0 \le \Vn(0).
\end{equation}
Thus, by Proposition 4.2 in Azcue and Muler \cite{AM2005}, to prove inequality \eqref{eq:lower_bound_Vn}, it suffices to show that there exists $q > 0$ such that $\VD - q/\sqrtn$ is a subsolution of $\Fn = 0$.

$\Gn$ is linear with respect to $u$, $u_x$, and $u(\cdot)$; thus,
\begin{align}
\Gn \big(x, \VD(x) - q/\sqrtn, \VD'(x), \VD(\cdot) - q/\sqrtn \, \big) &= \Gn \big(x, \VD(x), \VD'(x), \VD(\cdot) \big) - \dfrac{q}{\sqrtn} \, \Gn (x, 1, 0, 1) \notag \\
&= \Gn \big(x, \VD(x), \VD'(x), \VD(\cdot) \big) - \dfrac{q}{\sqrtn} \, \big( \del + n \la S_Y(d) \big),
\label{eq:Gn_Umq}
\end{align}
in which $d = \sqrtn x$ and $S_Y(d):=\mathbb{P}(Y > d)$.  Note that there is $N=N(\eps)>0$ such that,
\begin{equation}\label{eq:1}
\E\Big(
Y^3 \, e^{\frac{\gamt}{\sqrt{N}}Y} 
\Big) < \infty.
\end{equation}
Now, from \eqref{eq:Gn_bnd} in the appendix, for $n \ge N$, we can bound $\Gn$ evaluated at $\VD - q/\sqrtn$ on $[0, \bD]$ as follows:
\begin{align}
\Gn \big(x, \VD(x) - q/\sqrtn, \VD'(x), \VD(\cdot) - q/\sqrtn \, \big) &\le  \dfrac{\la A}{\sqrtn} - \dfrac{q}{\sqrtn} \, \big( \del + n \la S_Y(d) \big) < \dfrac{\la A -  q \del}{\sqrtn} \, ,
\label{eq:GnUmq_bnd}
\end{align}
in which the positive constant $A$ is defined in \eqref{eq:Gn1_bnd}.  Choose $q = q(N)$ so that
\begin{equation}\label{eq:q}
q \ge \dfrac{\la A}{\del}.
\end{equation}
(Because $A$ in \eqref{eq:Gn1_bnd} depends on $N$, $q$'s lower bound in \eqref{eq:q} depends on $N$.)  Then, $\la A  - q \del$ is non-positive, and inequality \eqref{eq:GnUmq_bnd} implies that $\Gn$ evaluated at $\VD - q/\sqrtn$ is negative on $[0, \bD]$.

Because $\VD'(x) = 1$ for all $x > \bD$, it follows that, for all $x \ge 0$ and for all $n > \max\big(N, q^2 \big)$,
\begin{align*}
\Fn \big(x, \VD(x) - q/\sqrtn, \VD'(x), \VD(\cdot) - q/\sqrtn \, \big) \le 0 = \Fn \big(x, \Vn(x), \Vn'(x), \Vn(\cdot) \big).
\end{align*}
We have shown that $\VD - q/\sqrtn$ is a subsolution of $\Fn = 0$, and Proposition 4.2 in Azcue and Muler \cite{AM2005} implies the bound of $\Vn$ in \eqref{eq:lower_bound_Vn}.
\end{proof}

In the next proposition, we provide an upper bound of $\Vn$, and we use Proposition 5.1 in Azcue and Muler \cite{AM2005} to prove the proposition.

\begin{proposition}\label{prop:upper_bnd}
Assume there exists $\eps > 0$ such that
\begin{equation}\label{eq:Zd_bnd}
\sup \limits_{d \ge 0} \E \big( e^{\eps(Y - d)} \, \big| \, Y > d \big) < \infty.
\end{equation}
Then, there exists $p =p(\eps)> 0$ and $N'=N'(\eps)>0$ such that, for all $n \ge N'$ and all $x \ge 0$,
\begin{align}\label{eq:upper_bound_Vn}
\Vn(x) \le \VD(x) + \dfrac{p}{\sqrtn}.
\end{align}
\end{proposition}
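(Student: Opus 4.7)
The plan is to mirror Proposition~\ref{prop:lower_bnd} in the reverse direction, producing $\VD+p/\sqrtn$ as a \emph{supersolution} of $\Fn=0$ and invoking Proposition~5.1 of Azcue and Muler \cite{AM2005} instead of their Proposition~4.2.  The growth Condition~3 is immediate from \eqref{eq:VD_1}.  Because $\Fn=\min\{\Gn,\,u_x-1\}$, the supersolution property reduces to (i)~$\VD'\ge 1$ on $\R^+$, and (ii)~$\Gn\bigl(x,\VD(x)+p/\sqrtn,\VD'(x),\VD(\cdot)+p/\sqrtn\bigr)\ge 0$.  Part (i) follows from $\VD$ being concave on $[0,\bD]$ with $\VD'(\bD)=1$ and $\VD'\equiv 1$ on $(\bD,\infty)$.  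For (ii), the linearity identity used to derive \eqref{eq:Gn_Umq} rewrites the left-hand side as $\Gn(x,\VD(x),\VD'(x),\VD(\cdot))+\tfrac{p}{\sqrtn}(\del+n\la S_Y(\sqrtn x))$, so the entire proof reduces to a \emph{uniform} lower bound $\Gn(\VD)(x)\ge -C/\sqrtn$ on $\R^+$, for some constant $C=C(\eps)$ and all $n$ sufficiently large; then choosing $p\ge C/\del$ closes the argument.

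On $[0,\bD]$ the desired estimate is the sign-flipped analogue of the appendix computation behind Proposition~\ref{prop:lower_bnd}.  I would Taylor-expand $\VD(x-t/\sqrtn)$ to second order around $x$, use the diffusion HJB \eqref{eq:FBP_diff} together with the identity $\cn=(\sqrtn+\tet)\la\E Y$ to cancel the $\mO(n)$ and $\mO(\sqrtn)$ terms, and estimate the cubic remainder using boundedness of $\VD'''$ on $[0,\bD]$ and finiteness of $\E(Y^3)$ (which follows from \eqref{eq:Zd_bnd} applied at $d=0$).  The tail contribution $n\la\int_{\sqrtn x}^\infty \VD(x-t/\sqrtn)\,dF_Y(t)$ arising from jumps that would drive surplus below $0$ is killed by the exponential decay of $S_Y$.

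The main obstacle is the region $(\bD,\infty)$, where $\VD$ is linear but does \emph{not} satisfy the diffusion HJB, so the appendix approach does not apply verbatim.  Here I compute $\Gn(\VD)$ directly and exploit the tangent-line estimate $\VD(y)\le\VD(x)+(y-x)$ for $y\le x$, valid by concavity of $\VD$ combined with $\VD'(x)=1$ on $(\bD,\infty)$.  This upper-bounds $\int_0^{\sqrtn x}\VD(x-t/\sqrtn)\,dF_Y(t)$ by $\VD(x)F_Y(\sqrtn x)-\tfrac{1}{\sqrtn}\int_0^{\sqrtn x}t\,dF_Y(t)$, and a short calculation using $\VD(x)=\tet\la\E Y/\del+(x-\bD)$ yields
\[
\Gn(\VD)(x)\;\ge\;n\la\,S_Y(\sqrtn x)\,\VD(x)\;+\;\del(x-\bD)\;-\;\sqrtn\,\la\int_{\sqrtn x}^{\infty}t\,dF_Y(t).
\]
Condition~\eqref{eq:Zd_bnd} furnishes $\E(Y-d\,|\,Y>d)\le M/\eps$ for a constant $M=M(\eps)$, and hence $\int_d^\infty t\,dF_Y(t)\le (M/\eps+d)S_Y(d)$.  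Setting $d=\sqrtn x$, the additive $n\la x\,S_Y(\sqrtn x)$ piece pairs with $n\la\VD(x)S_Y(\sqrtn x)$ to give $n\la S_Y(\sqrtn x)[\VD(x)-x]\ge 0$ (using $\VD(x)-x=\VD(\bD)-\bD>0$ from \eqref{eq:bD_ineq}), while the leftover $\sqrtn\la(M/\eps)S_Y(\sqrtn x)$ is controlled by the Markov bound $S_Y(d)\le M e^{-\eps d}$: for $x\ge\bD$ this gives a super-polynomial factor $\sqrtn e^{-\eps\sqrtn\bD}$, which is $\le 1/\sqrtn$ once $n\ge N'(\eps)$.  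Combining the two regions produces the required $\Gn(\VD)\ge -C/\sqrtn$, and Proposition~5.1 of \cite{AM2005} then yields the bound \eqref{eq:upper_bound_Vn}.
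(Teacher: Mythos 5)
Your overall strategy---showing that $\VD + p/\sqrtn$ is a supersolution of $\Fn = 0$ and invoking Proposition~5.1 of Azcue and Muler---is the right one and matches the paper.  The flaw is in the reduction you announce: you claim that, after the linearity identity, the supersolution inequality on $[0,\bD]$ follows from a \emph{uniform} lower bound $\Gn\bigl(x,\VD(x),\VD'(x),\VD(\cdot)\bigr)\ge -C/\sqrtn$, with the choice $p\ge C/\del$ then finishing the proof.  This discards the piece $p\sqrtn\,\la S_Y(\sqrtn x)$ of $\tfrac{p}{\sqrtn}\Gn(x,1,0,1)=\tfrac{p}{\sqrtn}\bigl(\del+n\la S_Y(\sqrtn x)\bigr)$ and keeps only $\tfrac{p}{\sqrtn}\del$.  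But the claimed bound on $\Gn(\VD)$ is false.  Extending $\VD$ to negative arguments via \eqref{eq:VD} and separating as in the appendix, $\Gn(\VD)(x)$ contains the tail term $n\la\int_{\sqrtn x}^{\infty}\VD\bigl(x-\tfrac{t}{\sqrtn}\bigr)\,dF_Y(t)$, which is \emph{negative} and, by the appendix computation, behaves like $\tfrac{\la S_Y(d)}{C}\bigl\{-\sqrtn\,(\gamo+\gamt)\,\E(Y-d\,|\,Y>d)+\mO(1)\bigr\}$ with $d=\sqrtn x$.  For $x$ near $0$ one has $S_Y(\sqrtn x)\approx 1$, so this contribution is of order $-\sqrtn$, not $\ge -C/\sqrtn$; your remark that the tail is ``killed by the exponential decay of $S_Y$'' fails precisely because on $[0,\bD]$ the quantity $\sqrtn x$ is not bounded away from zero.

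The discarded term $p\sqrtn\,\la S_Y(\sqrtn x)$ is exactly what rescues the argument, and the paper keeps it: it is of the same magnitude $\sqrtn\,S_Y(d)$ as the offending tail term, and taking $p$ with $pC>(\gamo+\gamt)\sup_{d\ge 0}\E(Y-d\,|\,Y>d)$ makes the grouped coefficient of $\sqrtn\,S_Y(d)$ positive, which (once $n$ is large enough to absorb the bounded correction furnished by \eqref{eq:Zd_bnd2}) yields $\Gn(\VD+p/\sqrtn)\ge 0$ on $[0,\bD]$; the $\tfrac{p}{\sqrtn}\del$ piece plays no role and is in fact dropped there.  Note this choice of $p$ is unrelated to $p\ge C/\del$.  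By contrast, your treatment of $(\bD,\infty)$ is essentially sound, because there you \emph{do} retain $n\la S_Y(\sqrtn x)\VD(x)$ and $S_Y(\sqrtn x)\le S_Y(\sqrtn\bD)$ is exponentially small; the paper's version is simpler still, since $\VD$ is affine on $(\bD,\infty)$ so $\Gn(\VD+p/\sqrtn)$ is computable in closed form and seen to be strictly positive from \eqref{eq:bD_ineq} with no exponential estimate needed.  To repair your proof on $[0,\bD]$, do not seek a uniform bound on $\Gn(\VD)$ alone; expand $\Gn(\VD+p/\sqrtn)$ fully, keep the $n\la S_Y(\sqrtn x)$ part of $\Gn(x,1,0,1)$, and pair it with the tail integral before choosing $p$.
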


\begin{proof}
By Proposition 5.1 of Azcue and Muler \cite{AM2005}, because $\VD + p/\sqrtn$ is absolutely continuous and because $\VD(x) + p/\sqrtn \le x + k$ for some $k > 0$ and for all $x \ge 0$, to prove the bound in \eqref{eq:upper_bound_Vn}, it is enough to show that $\VD + p/\sqrtn$ is a supersolution of $\Fn = 0$.

First, notice that by the condition \eqref{eq:Zd_bnd}, there is $\tilde N > 0$, such that
\begin{equation}\label{eq:Zd_bnd2}
\sup_{d \ge 0} 
\E\Big(
(Y-d)^2 \, e^{\frac{\gamt}{\sqrt{\tilde N}}(Y-d)} \, \Big| \, Y > d
\Big) < \infty.
\end{equation}
and 
\begin{equation}\label{eq:2}
\E\Big(
Y^3 \, e^{\frac{\gamt}{\sqrt{\tilde N}}Y} 
\Big) < \infty.
\end{equation}
Second, evaluate $\Gn$ at $\VD + p/\sqrtn$ on $[0, \bD]$.  Let $d = \sqrtn x$ and $C = \big( \gamo + \gamt \big) \left( \frac{\gamt}{\gamo} \right)^{\frac{\gamo - \gamt}{\gamo + \gamt}}$; then, via a calculation similar to the one in Appendix \ref{app:A}, we have
\begin{align}\label{eq:Gn_upper}
&\Gn \Big( x, \VD(x) + \frac{p}{\sqrtn}, \VD'(x), \VD(\cdot) + \frac{p}{\sqrtn} \Big) = \Gn \big( x, \VD(x), \VD'(x), \VD(\cdot) \big) + \frac{p}{\sqrtn} \, \Gn(x, 1, 0, 1)  \notag \\
&= \frac{\la}{2C\sqrtn} \, \int_0^1 (1 - \ome)^2 \left\{ \gamo^3 e^{\gamo x} \E\Big(Y^3e^{\frac{- \gamo \ome}{\sqrtn}Y} \Big) + \gamt^3 e^{-\gamt x} \E\Big(Y^3e^{\frac{\gamt \ome}{\sqrtn}Y} \Big) \right\} d\ome \notag \\
&\quad + \frac{\la S_Y(d)}{C} \, \bigg\{ - \, \sqrtn \big(\gamo + \gamt \big) \, \E(Y - d|Y > d) \notag \\
&\qquad\qquad\qquad \left. + \int_0^1 (1 - \ome)  \, \E\Big[ \gamo^2 (Y-d)^2 \, e^{-\frac{\gamo \ome}{\sqrtn}(Y-d)}  - \gamt^2 (Y-d)^2 \, e^{\frac{\gamt \ome}{\sqrtn} (Y-d)}  \, \Big| \, Y > d \Big] d\ome \right\} \notag \\
&\quad + \dfrac{p}{\sqrtn} \, \big( \del + n \la S_Y(d) \big) \notag \\
&> \dfrac{\la S_Y(d)}{C} \bigg\{ \sqrtn \big( p C - (\gamo + \gamt)\E (Y - d|Y > d) \big) \notag \\
&\qquad\qquad\quad \left. - \int_0^1 (1 - \ome) \, \E \Big[ \gamt^2 (Y-d)^2 \, e^{\frac{\gamt \ome}{\sqrtn} (Y-d)} - \gamo^2 (Y-d)^2 \, e^{-\frac{\gamo \ome}{\sqrtn}(Y-d)} \, \Big| \, Y > d \Big] d\ome \right\}.
\end{align}
Choose $p$ so that $pC > (\gamo + \gamt) \sup_{d \ge 0} \E(Y - d|Y > d)$, this supremum is finite because of the bound in \eqref{eq:Zd_bnd2}.  Furthermore, the bound in \eqref{eq:2} implies that there exists $N' \ge \tilde N$ such that, if $n \ge N'$, then the expression in \eqref{eq:Gn_upper} is non-negative.  Also, $\VD' \ge 1$ on $[0, \bD]$, so $\Fn$ evaluated at $\VD + p/\sqrtn$ is non-negative on $[0, \bD]$.

Next, evaluate $\Gn$ at $\VD + p/\sqrtn$ on $(\bD, \infty)$.  Again, let $d = \sqrtn x$; then, after simplifying,
\begin{align*}
&\Gn \Big( x, \VD(x) + \frac{p}{\sqrtn}, \VD'(x), \VD(\cdot) + \frac{p}{\sqrtn} \Big) \\
&= \Gn \big( x, x - \bD, 1, \cdot - \bD \big) + \left( \dfrac{\tet \la \E Y}{\del} + \dfrac{p}{\sqrtn} \right) \Gn(x, 1, 0, 1) \\
&= \del \left\{ \big( x - \bD \big) + \dfrac{p}{\sqrtn}  \right\} + n \la S_Y(d) \left\{ \dfrac{\tet \la \E Y}{\del} - \bD + \dfrac{p - \E(Y - d|Y > d)}{\sqrtn} \right\} > 0,
\end{align*}
in which the inequality follows from \eqref{eq:bD_ineq} and from choosing $p > \sup_{d \ge 0} \E(Y - d|Y > d)$.  Also, $\VD' = 1$ on $(\bD, \infty)$; thus, $\Fn$ evaluated at $\VD + p/\sqrtn$ equals zero on $(\bD, \infty)$.

We have shown that $\VD + p/\sqrtn$ is a supersolution of $\Fn = 0$, and Proposition 5.1 in Azcue and Muler \cite{AM2005} implies the bound of $\Vn$ in \eqref{eq:upper_bound_Vn}.
\end{proof}

In the following theorem, we show that $\Vn$ converges to $\VD$ at a rate of order $\mO \big( n^{-1/2} \big)$.

\begin{thm}\label{thm:Vn_lim}
If \eqref{eq:Y3_bnd} and \eqref{eq:Zd_bnd} hold, then there exists $C' > 0$ such that, for all $n > \max(N, N')$ and $x \ge 0$, 
\begin{align}\label{ASAF1}
\big| \Vn(x) - \VD(x) \big| \le \dfrac{C'}{\sqrtn} \, .
\end{align}
\end{thm}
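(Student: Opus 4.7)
The proof is essentially an immediate corollary of Propositions \ref{prop:lower_bnd} and \ref{prop:upper_bnd}, so the plan is very short.

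First, I would observe that the two hypotheses \eqref{eq:Y3_bnd} and \eqref{eq:Zd_bnd} of the theorem are exactly the hypotheses of the two bounding propositions. Concretely, \eqref{eq:Y3_bnd} gives the moment-generating control needed in Proposition \ref{prop:lower_bnd} and \eqref{eq:Zd_bnd} gives the overshoot control needed in Proposition \ref{prop:upper_bnd}. Thus, for the same $\eps$ chosen to satisfy both, Proposition \ref{prop:lower_bnd} supplies constants $q = q(\eps) > 0$ and $N = N(\eps) > 0$ such that
\[
\VD(x) - \frac{q}{\sqrtn} \le \Vn(x) \qquad \text{for all } n > N,\ x \ge 0,
\]
while Proposition \ref{prop:upper_bnd} supplies constants $p = p(\eps) > 0$ and $N' = N'(\eps) > 0$ such that
\[
\Vn(x) \le \VD(x) + \frac{p}{\sqrtn} \qquad \text{for all } n \ge N',\ x \ge 0.
\]

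Next, I would set $C' := \max(p, q)$ and combine the two inequalities: for every $n > \max(N, N')$ and every $x \ge 0$,
\[
-\frac{C'}{\sqrtn} \le -\frac{q}{\sqrtn} \le \Vn(x) - \VD(x) \le \frac{p}{\sqrtn} \le \frac{C'}{\sqrtn},
\]
which is exactly the bound \eqref{ASAF1}. There is no real obstacle here since all the work was carried out in the two propositions; the only point worth checking is that the constants $N, N', p, q$ in the two propositions can be produced from a common $\eps > 0$, which they can because both hypotheses are assumed in the theorem statement and each proposition is monotone in $\eps$ (a smaller $\eps$ for which both expectations are finite works simultaneously for both). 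Hence $C'$ is an absolute (in $n$ and $x$) constant, completing the proof.
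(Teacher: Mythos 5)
Your proof is correct and follows exactly the same route as the paper: combine the lower bound from Proposition \ref{prop:lower_bnd} and the upper bound from Proposition \ref{prop:upper_bnd}, take $C' = \max(p, q)$, and restrict to $n > \max(N, N')$. The extra remark about choosing a common $\eps$ is a harmless refinement the paper leaves implicit.
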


\begin{proof}
From Propositions \ref{prop:lower_bnd} and \ref{prop:upper_bnd}, it follows that 
$$
\VD(x) - \dfrac{q}{\sqrtn} \le \Vn(x) \le \VD(x) + \frac{p}{\sqrtn} \, ,
$$
with $q > 0$ and $p > 0$ given in the proofs of those propositions.  Subtracting $\VD(x)$ from each side yields
\begin{align}\notag
- \dfrac{q}{\sqrtn} \le \Vn(x) - \VD(x) \le \frac{p}{\sqrtn} \, .
\end{align}
Thus, if we set $C' = \max(q, p)$, inequality \eqref{ASAF1} follows.
\end{proof}

\begin{remark}
Our assumption that the moment generating function of $Y$ is finite in a neighborhood of $0$ stems from the fact that we estimate $G_n$ for the function $V_D$, which includes an exponential term.  Although our techniques cannot handle random variables with infinite moment generating functions $($for example, Pareto and lognormal$)$, observe that not only do we prove convergence, we also provide the {\rm rate} of convergence.   Specifically, Theorem {\rm \ref{thm:Vn_lim}} asserts that the rate of convergence of $\Vn$ to $\VD$ is of order $\mO\big(n^{-1/2} \big)$, and, moreover, that the convergences is {\rm uniform} over $x \in [0,\infty)$.  By using probabilistic techniques and relying on convergence in distribution of the underlying processes, Theorem $3.6$ in B\"auerle {\rm \cite{B2004}} proves the pointwise convergence $\lim\limits_{n \to \infty}\Vn(x) = \VD(x)$ without estimating the rate of convergence, but, as mentioned in the introduction, these techniques are only valid for the case of bounded rates of dividend payments.  We leave determining the rate of convergence in more general setups for future research.  \qed
\end{remark}

We end this section with an example in which we calculate $C'$ in Theorem \ref{thm:Vn_lim}.

\begin{example}\label{ex:gamma}
Let $Y \sim Gamma(2, 1)$ with probability density function $f_Y(y) = y e^{-y}$ for $y \ge 0$, $\la = 10$, $\tet = 0.07$, and $\del = 0.10$, which is the example Azcue and Muler {\rm \cite{AM2005}} consider in Section $10.1$ of their paper.  Azcue and Muler {\rm \cite{AM2005}} give the following value function for $n = 1;$ note that $V_1$ embodies a non-barrier band strategy:
\begin{align}\label{V1}
V_1(x) = 
\begin{cases}
x + 2.119, &\quad 0 \le x < 1.803, \\
0.0944 e^{-1.48825x} - 9.431 e^{-0.079355x} + 11.257 e^{0.039567x}, &\quad 1.803 \le x < 10.22, \\
x + 2.456, &\quad x \ge 10.22.
\end{cases}
\end{align}
For this example, $\gamo = 0.03894$, $\gamt = 0.08561$, and $\bD = 12.650$.  In \eqref{eq:1}, we may set $N = 1$, from which it follows that
\[
A = \frac{1}{6C} \left\{ \gamo^3 e^{\gamo \bD} \E\big(Y^3 \big) + \gamt^3 \, \E\Big(Y^3e^{\gamt Y} \Big) \right\} = 0.04651,
\]
and gives $q = \la A/\del = 100 A = 4.651$.  Also,
\[
\sup_{d \ge 0} \E(Y - d|Y > d) = \sup_{d \ge 0} \frac{2 + d}{1 + d} = 2,
\]
which implies that we can set
\[
p > 2 \left( \dfrac{\gamo}{\gamt} \right)^{\frac{\gamo - \gamt}{\gamo + \gamt}} = 2.687.
\]
It follows that $C' = \max(q, p) = 4.651$.  

The numerical scheme for computing $\Vn$ for $n \in \N$ is prescribed on pages $95$--$96$ in Schmidli {\rm \cite{S2008}}. For completeness, we describe it here.  From Gerber {\rm \cite{G1969}}, we know that the optimal policy is a band policy.  Now, for the initial capital $x = 0$, either $(a)$ dividends are paid, in which case $V_n(0) = \la(\sqrtn + \tet)/(n\la + \del);$ or, $(b)$ no dividends are paid immediately and there is a value $b_0 = \inf\{x>0: V_n'(x)=1\}>0$, and when surplus lies in the band $[0,b_0]$, no dividends are paid.  The value function in this case would be derived by taking a solution $u$ of $\Gn \big(x, u(x), u_x(x), u(\cdot) \big) = 0$ on $[0, b_0]$, with the initial condition $u(0) = 1$ and by setting $V_n(x) = u(x)/u_x(b_0)$ for $x\in[0,b_0]$. Then, one repeats this process on $[b_0,\infty)$.

In our case, assume that dividends are paid in the band that includes $x=0;$ then, by differentiating the integro-differential equation $\Gn = 0$ twice, one obtains the differential equation
\begin{align}
0=n\del  u(x)+2 \big(\sqrtn\del -\la\tet n \big)u_x(x) + \big(-3\la n-4\la\sqrtn+\del \big) u_{xx}(x)-2\la(\sqrtn+\tet)u_{xxx}(x),
\end{align}
which implies the general form of the solution $u(x)=\sum_{i=1}^3a_i e^{\alpha_i x}$.  By substituting this ansatz into $\Gn = 0$ and by using the initial condition $u(0) = 1$, one obtains, for $n = 1$,
\begin{align}\notag
u(x)=5.947983e^{0.039567x} - 5.058731e^{-0.079355x} + 0.110748e^{-1.48825x} .
\end{align}
However, the minimum of $u_x$ is attained at $x=0$. Hence, $u(x)/u(b_0)<1$ for any $b_0>0$. Therefore, we deduce that dividends are paid on the band that includes $x=0$. That is, $V_1(x) = x+ \la(\sqrtn+\tet)/(n\la + \del)$ in a neighborhood of $x = 0$. To find the first band's upper threshold $b_0$, we define for any $b > 0$ the function $V_1^b(x):[b,\infty) \to \R$ such that $V_1^b$ solves the integro-differential equation with the initial condition $V_1^b(b) = V_1(b) = b + \la(\sqrtn+\tet)/(n\la +\del)$. Then, we set 
$$
b_0=\underset{x\ge b}{\text{argmin}}\;(V_1^b)_x(x)=1,
$$
and  $b_1$ equals the value of $x$ for which  $(V_1^{b_0})_x(x)=1$. The latter is the upper bound of the second band from the bottom.  In our case, $b_0=1.80303$ and $b_1=10.2162$. Above $x=b_1$, it is always optimal to pay dividends. In conclusion, $V_1$ is given by \eqref{V1}. 

We repeated this procedure for $n = 4$, $9$, and $25$ and obtained the following value functions: 
\begin{align}\notag
V_{4}(x) = 
\begin{cases}
x + 1.66, &\quad 0 \le x < 0.63, \\
0.0441289 e^{-2.98829x} -10.07189 e^{-0.00823677x} + 10.86314 e^{0.0392537x}, &\quad 0.63 \le x < 10.8, \\
x + 1.799
, &\quad x \ge 10.8.
\end{cases}
\end{align}
\begin{align}\notag
V_{9}(x) = 
\begin{cases}
x + 0.94746, &\quad 0 \le x < 0.266, \\
0.072965 e^{-4.4883x} -10.2573 e^{-0.0834207x} + 10.84119 e^{0.0394149x}, &\quad 0.266 \le x < 13.24343, \\
x + 1.66343
, &\quad x \ge 13.24343.
\end{cases}
\end{align}
\begin{align}\notag
V_{25}(x) = 
\begin{cases}
x + 0.51043, &\quad 0 \le x < 0.105, \\
0.0441289 e^{-7.48831x} -10.51006 e^{-0.00842816x} + 10.86314 e^{0.0390648x}, &\quad 0.105 \le x < 12.11, \\
x + 1.537
, &\quad x \ge 12.11.
\end{cases}
\end{align}
The comparison between these three $V_n$'s and the respective bounds $V_D+p/\sqrtn$ and $V_D-q/\sqrtn$ is illustrated in Figure \ref{fig1}.
\end{example}

\subsection{$\mO \big( n^{-1/2} \big)$-optimality of using the barrier $\bD$ for the scaled CL model}\label{sec:bsn}

In this section, we show that using $\bD$ as a barrier strategy for the scaled CL model, in place of the optimal band strategy, is $\mO\big(n^{-1/2} \big)$-optimal.  Specifically, we show that there exists $C'' > 0$ and $N > 0$ such that $n > N$ implies
\[
\big| \Vn(x) - \Vdn(x) \big| < \dfrac{C''}{\sqrtn} \, ,
\]
for all $x > 0$, in which $\Vdn$ denotes the (expected) payoff function for the scaled problem when we use the barrier $\bD$.  From Lemma 2.48 of Schmidli \cite{S2008}, we know that there is a unique solution $\gn \in \mC^1(\R+)$ of the integro-differential equation $\Gn = 0$ with $\gn(0) = 1$.  Moreover, the proof of this lemma shows that $\gn$ is strictly increasing.  We use $\gn$ to construct an expression for $\Vdn$ as follows:
\begin{equation}\label{eq:Vdn}
\Vdn(x) = 
\begin{cases}
\dfrac{\gn(x)}{\gn'(\bD)}, &\quad 0 \le x \le \bD, \vspace{1ex} \\
\Vdn(\bD) + (x - \bD), &\quad x > \bD.
\end{cases}
\end{equation}
Note that $\Vdn \in \mC^1(\R^+)$ with $\Vdn'(\bD) = 1$.

We connect $\Vdn$ and $\Vn$ via $\VD$ because (1) $\Vdn$ and $\VD$ are (expected) payoff functions for two different problems (scaled CL model versus its diffusion approximation) but with the {\it same} barrier $\bD$, and (2) we have a demonstrated relationship between $\VD$ and $\Vn$ in \eqref{ASAF1} in Theorem \ref{thm:Vn_lim}.  We begin by proving a theorem that is parallel to Theorem \ref{thm:Vn_lim}, but, first, we prove a comparison lemma for $\Gn$ on $[0, \bD]$, which we use to prove the parallel theorem.

\begin{lemma}\label{lem:comp}
Suppose $u, v \in \mC^1([0, \bD])$ satisfying the following conditions:
\begin{enumerate}
\item[$(i)$] $u(0) \le v(0)$.
\item[$(ii)$] $\Gn \big(x, u(x), u_x(x), u(\cdot) \big) \le \Gn \big(x, v(x), v_x(x), v(\cdot) \big)$ for all $x \in (0, \bD]$.
\item[$(iii)$] $u_x(\bD) = v_x(\bD)$.
\end{enumerate}
Then, $u \le v$ on $[0, \bD]$.
\end{lemma}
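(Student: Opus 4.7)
My plan is to reduce the statement to a minimum--principle argument applied to the difference $w := v - u \in \mC^1([0, \bD])$.  Because $\Gn$ is linear in its function arguments, for every $x \in (0, \bD]$ hypothesis $(ii)$ yields
\[
\Gn \big(x, w(x), w_x(x), w(\cdot) \big) = \Gn \big(x, v(x), v_x(x), v(\cdot) \big) - \Gn \big(x, u(x), u_x(x), u(\cdot) \big) \ge 0,
\]
while hypotheses $(i)$ and $(iii)$ translate into $w(0) \ge 0$ and $w_x(\bD) = 0$.  The goal becomes showing $w \ge 0$ on $[0, \bD]$.

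I would argue by contradiction and suppose that $m := \min_{x \in [0, \bD]} w(x) < 0$, attained at some $x_0 \in [0, \bD]$.  Since $w(0) \ge 0 > m$, necessarily $x_0 \in (0, \bD]$.  The key observation is that in \emph{both} subcases---interior ($x_0 \in (0, \bD)$, where $w_x(x_0) = 0$ as $x_0$ is an interior critical point of $w$) and boundary ($x_0 = \bD$, where $w_x(x_0) = 0$ by hypothesis $(iii)$)---one has $w_x(x_0) = 0$.  This is the only place where hypothesis $(iii)$ is genuinely used, and its role is precisely to prevent $w$ from decreasing monotonically into a strictly negative value at the right endpoint.

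Finally, I would substitute $w_x(x_0) = 0$ into the definition \eqref{eq:Gn_def} of $\Gn$.  For $t \in [0, \sqrtn x_0]$ we have $x_0 - t/\sqrtn \in [0, x_0] \subseteq [0, \bD]$, so $w(x_0 - t/\sqrtn) \ge w(x_0)$; integrating against $dF_Y$ gives $\int_0^{\sqrtn x_0} w(x_0 - t/\sqrtn) \, dF_Y(t) \ge w(x_0) F_Y(\sqrtn x_0)$.  Plugging this bound back into $\Gn$ yields
\[
\Gn \big(x_0, w(x_0), 0, w(\cdot) \big) \le \big( \del + n \la S_Y(\sqrtn x_0) \big) \, w(x_0) < 0,
\]
since $w(x_0) < 0$ and $\del + n \la S_Y(\sqrtn x_0) \ge \del > 0$.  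This contradicts $\Gn \ge 0$ at $x_0$, and therefore $w \ge 0$ on $[0, \bD]$, i.e., $u \le v$.

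I do not foresee any substantive obstacle: the argument is a clean minimum-principle computation in which strict positivity of $\del$ (coupled with non-negativity of $n \la S_Y(\sqrtn x_0)$) supplies the strict sign reversal that closes the contradiction.  The only care required is in the boundary subcase $x_0 = \bD$, where hypothesis $(iii)$ is invoked to force $w_x(x_0) = 0$; the rest reduces to bounding the integral term from below by the minimum of $w$ times the mass of $F_Y$ on $[0, \sqrtn x_0]$.
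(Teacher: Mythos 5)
Your proof is correct and takes essentially the same approach as the paper: the paper argues that $u-v$ would attain a positive maximum at some $x_0\in(0,\bD]$ with $u_x(x_0)=v_x(x_0)$ (by interior criticality or by hypothesis $(iii)$ at $x_0=\bD$), and then shows $\Gn(x_0,v,\cdot)-\Gn(x_0,u,\cdot)<0$ by exactly the same maximum-principle bound on the integral term, contradicting $(ii)$. Your reformulation via $w=v-u$ attaining a negative minimum, together with the explicit constant $\del+n\la S_Y(\sqrtn x_0)>0$, is just an equivalent rewriting of the paper's computation.
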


\begin{proof}
Suppose, on the contrary, that $u(x) > v(x)$ for some value of $x \in (0, \bD]$.  Then, there exists $x_0 \in (0, \bD]$ at which $u - v$ achieves a positive maximum, with $u_x(x_0) = v_x(x_0)$.  Note that condition (iii) of the lemma ensures $u_x(x_0) = v_x(x_0)$ if $x_0$ equals the endpoint $\bD$.  Then,
\begin{align*}
0 &\le \Gn\big(x_0, v(x_0), v_x(x_0), v(\cdot) \big) - \Gn \big(x_0, u(x_0), u_x(x_0), u(\cdot) \big) \\
&= (n \la + \del)v(x_0) - \la \big(\sqrtn + \tet \big) \E Y \, v_x(x_0) - n \la \int_0^{\sqrtn x_0} v \Big(x_0 - \frac{t}{\sqrtn}\Big) dF_Y(t) \\
&\quad - (n \la + \del)u(x_0) + \la \big(\sqrtn + \tet \big) \E Y \, u_x(x_0) + n \la \int_0^{\sqrtn x_0} u \Big(x_0 - \frac{t}{\sqrtn}\Big) dF_Y(t) \\
&= (n\la + \del) \big( v(x_0) - u(x_0) \big) - n \la \int_0^{\sqrtn x_0} \left(v \Big(x_0 - \frac{t}{\sqrtn}\Big) - u \Big(x_0 - \frac{t}{\sqrtn}\Big)\right)dF_Y(t) \\
&= - \big(n\la S_Y(\sqrtn x_0) + \del \big) \big( u(x_0) - v(x_0) \big) \\
&\quad - n \la \int_0^{\sqrtn x_0} \left(\big(u(x_0) - v(x_0) \big) - \left( u \Big(x_0 - \frac{t}{\sqrtn}\Big) - v \Big(x_0 - \frac{t}{\sqrtn}\Big)\right) \right)dF_Y(t) \\
&< 0,
\end{align*}
in which the inequality follows because $u - v$ achieves a positive maximum at $x_0$. Thus, we have a contradiction, so $u \le v$ on $[0, \bD]$.
\end{proof}

In the following theorem, we use Lemma \ref{lem:comp} to show that $\Vdn$ converges to $\VD$ at a rate of order $\mO \big( n^{-1/2} \big)$.  Theorem 3.7 in B\"auerle \cite{B2004} proves the pointwise convergence $\lim\limits_{n \to \infty} \Vdn(x) = \VD(x)$ without estimating the rate of convergence.

\begin{thm}\label{thm:Vdn_VD}
If \eqref{eq:Y3_bnd} and \eqref{eq:Zd_bnd} hold, then there exists $C' > 0$ such that, for all $n > \max(N, N')$ and $x \ge 0$, 
\begin{align}\label{ASAF2}
\big| \Vdn(x) - \VD(x) \big| \le \dfrac{C'}{\sqrtn} \, .
\end{align}
\end{thm}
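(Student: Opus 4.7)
The plan is to mirror the proof of Theorem \ref{thm:Vn_lim}, establishing the two-sided bound $V_D(x) - q/\sqrtn \le \Vdn(x) \le V_D(x) + p/\sqrtn$ with the same constants $q$ and $p$ produced in Propositions \ref{prop:lower_bnd} and \ref{prop:upper_bnd}. The key structural difference is that $\Vdn$ is a payoff function rather than a value function, so the Azcue--Muler comparison results do not apply directly to it. Instead, for the lower bound I will invoke Lemma \ref{lem:comp} on $[0, \bD]$ and then extend past $\bD$ by linearity; for the upper bound I will exploit the trivial inequality $\Vdn \le \Vn$ and then apply Proposition \ref{prop:upper_bnd}.

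For the lower bound on $[0, \bD]$, take $u = \VD - q/\sqrtn$ and $v = \Vdn$. Condition (i) of Lemma \ref{lem:comp} holds since $u(0) = -q/\sqrtn < 0 < 1/\gn'(\bD) = \Vdn(0)$ (where positivity of $\gn'(\bD)$ follows from the strict monotonicity of $\gn$ noted after \eqref{eq:Vdn}). Condition (ii) follows from the calculation already carried out in the proof of Proposition \ref{prop:lower_bnd}: for $q \ge \la A / \del$ and $n \ge N$, we showed $\Gn(x, u(x), u_x(x), u(\cdot)) \le 0$ on $[0, \bD]$, whereas $\Gn(x, v(x), v_x(x), v(\cdot)) = 0$ by the defining ODE for $\gn$. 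Condition (iii), namely $\VD'(\bD) = 1 = \Vdn'(\bD)$, is immediate: the first equality is the free-boundary condition in \eqref{eq:FBP_diff}, and the second follows from the formula $\Vdn'(x) = \gn'(x)/\gn'(\bD)$ on $[0, \bD]$ read off from \eqref{eq:Vdn}. Lemma \ref{lem:comp} then yields $\VD(x) - q/\sqrtn \le \Vdn(x)$ for $x \in [0, \bD]$. For $x > \bD$, both functions are affine with unit slope (see the second lines of \eqref{eq:VD_1} and \eqref{eq:Vdn}), so the inequality at $x = \bD$ propagates.

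For the upper bound, since $\Vn$ is defined as a supremum over \emph{all} admissible dividend strategies, and $\Vdn$ is the payoff for the particular barrier strategy at $\bD$, we have $\Vdn(x) \le \Vn(x)$ for every $x \ge 0$. Combining this with Proposition \ref{prop:upper_bnd} gives $\Vdn(x) \le \Vn(x) \le \VD(x) + p/\sqrtn$ for $n \ge N'$. Setting $C' = \max(q, p)$ (as in the proof of Theorem \ref{thm:Vn_lim}) and combining with the lower bound yields \eqref{ASAF2}.

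The only nontrivial point is the verification of condition (iii) in Lemma \ref{lem:comp}, and it is essentially built into the construction: the barrier $\bD$ is precisely the point where $\VD$ meets its linear continuation with slope one, and $\Vdn$ is normalized via division by $\gn'(\bD)$ for the same reason. Consequently, no new analytic estimates are needed beyond those already established; the proof is purely a matter of selecting the correct test functions and routing them through Lemma \ref{lem:comp} and the trivial inequality $\Vdn \le \Vn$.
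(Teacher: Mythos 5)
Your proof is correct and follows essentially the same route as the paper's: the upper bound via $\Vdn \le \Vn$ together with Proposition \ref{prop:upper_bnd}, and the lower bound via Lemma \ref{lem:comp} applied to $u = \VD - q/\sqrtn$ and $v = \Vdn$ on $[0,\bD]$, extended to $(\bD,\infty)$ by observing both sides are affine with unit slope there. The only difference is that you spell out the verification of conditions (i)--(iii) of Lemma \ref{lem:comp} more explicitly than the paper does, which is a minor improvement in exposition rather than a change of argument.
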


\begin{proof}
Note that $\Vdn \le \Vn$ on $\R^+$ because they are payoff functions for the same problem, and $\Vn$ is the maximum as the value function.   Also, from Proposition \ref{prop:upper_bnd}, there exists $p > 0$ such that $\Vn \le \VD + p/\sqrtn$ on $\R^+$; thus, $\Vdn \le \VD + p/\sqrtn$ on $\R^+$.

Next, compare $\VD - q/\sqrtn$ and $\Vdn$ on $[0, \bD]$ for $q > 0$ given in Proposition \ref{prop:lower_bnd}.  In the proof of that proposition, we show that $\Gn$ evaluated at $\VD - q/\sqrtn$ is negative on $[0, \bD]$, and the construction in \eqref{eq:Vdn} shows that $\Gn$ evaluated at $\Vdn$ is zero on $[0, \bD]$.  Because $u = \VD - q/\sqrtn$ and $v = \Vdn$ satisfy the conditions of Lemma \ref{lem:comp}, we deduce that $\VD - q/\sqrtn \le \Vdn$ on $[0, \bD]$.  Furthermore, because $\Vdn$ and $\VD - q/\sqrtn$ have slope identically equal to $1$ for $x \ge \bD$, we deduce $\VD - q/\sqrtn \le \Vdn$ on $\R^+$.  

Thus, if we set $C' = \max(q, p)$, as in the proof of Theorem \ref{thm:Vn_lim}, inequality \eqref{ASAF2} follows.
\end{proof}

The following theorem is the main result of this section.

\begin{thm}\label{thm:bD_eps_opt}
If \eqref{eq:Y3_bnd} and \eqref{eq:Zd_bnd} hold, then the barrier strategy with barrier $\bD$ is $\mO\big(n^{-1/2} \big)$-optimal for the scaled problem.  Specifically, then there exists $C'' > 0$ such that, for all $n > \max(N, N')$ and $x \ge 0$, 
\begin{align}\label{ASAF3}
\big| \Vn(x) - \Vdn(x) \big| \le \dfrac{C''}{\sqrtn} \, .
\end{align}
\end{thm}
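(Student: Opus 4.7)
The plan is to derive inequality \eqref{ASAF3} immediately from the two preceding convergence results, via the triangle inequality applied through the ``pivot'' function $\VD$. This is the reason the previous theorem was formulated in precisely the form $|\Vdn(x) - \VD(x)| \le C'/\sqrtn$: it pairs naturally with Theorem \ref{thm:Vn_lim} to yield a bound on $|\Vn(x) - \Vdn(x)|$ without needing to compare $\Vn$ and $\Vdn$ directly (which would be awkward, since $\Vn$ is the value function of an optimization problem while $\Vdn$ is the payoff under a specific suboptimal policy whose integro-differential characterization is \emph{different} from that of $\Vn$).

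Concretely, I would proceed as follows. First, invoke Theorem \ref{thm:Vn_lim} to obtain a constant $C'_1 > 0$ and a threshold $N_1 \ge \max(N, N')$ such that $|\Vn(x) - \VD(x)| \le C'_1/\sqrtn$ for all $n > N_1$ and all $x \ge 0$. Second, invoke Theorem \ref{thm:Vdn_VD} to obtain a (possibly different) constant $C'_2 > 0$ and threshold $N_2 \ge \max(N, N')$ such that $|\Vdn(x) - \VD(x)| \le C'_2/\sqrtn$ for all $n > N_2$ and all $x \ge 0$. Third, apply the triangle inequality
\[
\big| \Vn(x) - \Vdn(x) \big| \le \big| \Vn(x) - \VD(x) \big| + \big| \VD(x) - \Vdn(x) \big| \le \dfrac{C'_1 + C'_2}{\sqrtn},
\]
valid uniformly in $x \ge 0$ for all $n > \max(N_1, N_2)$. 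Setting $C'' = C'_1 + C'_2$ (or, using the notation of the preceding proofs in which a single $C' = \max(q, p)$ works for both theorems, simply $C'' = 2C'$) gives \eqref{ASAF3}.

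There is no genuine obstacle here; the work has already been done in Theorems \ref{thm:Vn_lim} and \ref{thm:Vdn_VD}, and in particular in the comparison Lemma \ref{lem:comp} and Propositions \ref{prop:lower_bnd}--\ref{prop:upper_bnd}. The only very minor point worth stating explicitly in the write-up is uniformity: both bounds hold uniformly in $x \in [0, \infty)$, so the triangle-inequality bound inherits this uniformity, justifying the claim that the barrier $\bD$ yields an $\mO(n^{-1/2})$-optimal strategy in the supremum norm over all initial surpluses, not merely pointwise. Because the uniform bound controls the suboptimality gap for every initial surplus $x$, this is precisely the statement that the barrier $\bD$ from the diffusion problem is $\eps$-optimal in the scaled CL model with $\eps = \mO(n^{-1/2})$, which is the interpretive content highlighted in the introduction.
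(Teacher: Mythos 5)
Your proof is correct and takes essentially the same approach as the paper's: both apply the triangle inequality through $\VD$, combining Theorems \ref{thm:Vn_lim} and \ref{thm:Vdn_VD} and setting $C'' = 2C'$. The paper's proof is a one-liner doing exactly this.
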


\begin{proof}
If we set $C'' = 2C'$, then \eqref{ASAF3} follows from \eqref{ASAF1}, \eqref{ASAF2}, and the triangle inequality.  
\end{proof}

\begin{remark}
Theorem {\rm \ref{thm:bD_eps_opt}} proves the $\mO\big(n^{-1/2} \big)$-optimality of using the optimal barrier for the diffusion approximation $($namely, $\bD)$ in place of the optimal band strategy for the $n$-scaled problem.  This result supports the common practice in the mathematical finance and insurance literature of using the diffusion approximation in place of the classical risk model.   \qed
\end{remark}

Although Theorem \ref{thm:bD_eps_opt} proves that using the barrier strategy with barrier $\bD$ is $\mO\big(n^{-1/2} \big)$-optimal for $\Vn$'s problem, we do not know that the barriers of $\Vn$'s band strategy converge to $\bD$; in fact, it does not appear to be true generally.  Indeed, consider the Gamma example on pages 95 and 96 of Schmidli \cite{S2008}:\ $V'(x) = 1$ in a neighborhood of $x = 0$, so we hypothesize that, as $n \to \infty$, the smallest barrier goes to $0$.

We end this paper with an example:\ let $Y \sim Exp(1)$, which implies $\Yn$ is exponentially distributed with mean $1/\sqrtn$.  As is well known, the optimal dividend strategy for $\Vn$'s problem is a barrier strategy; see, for example, Chapter 10 of Gerber \cite{G1979} or Avanzi \cite{A2009}.  Moreover, we have explicit expressions for the value function $\Vn$ and for its corresponding barrier $\bn$, and it is the latter in which we are interested.  From equation (2.28) in Avanzi \cite{A2009}, the optimal barrier equals
\begin{equation}\label{eq:bsn_exp}
\bn = \dfrac{1}{r_1 + r_2} \ln \left( \dfrac{r_2^2\big(\sqrtn - r_2 \big)}{r_1^2 \big(\sqrtn + r_1 \big)} \right),
\end{equation}
in which
\begin{equation}\label{eq:r1}
r_1 = \dfrac{1}{2 \la \big(\sqrtn + \tet \big)} \left[ \sqrt{\big(\sqrtn \la \tet + \del \big)^2 + 4n\del \la} - \big(\sqrtn \la \tet - \del \big) \right],
\end{equation}
and
\begin{equation}\label{eq:r2}
r_2 = \dfrac{1}{2 \la \big(\sqrtn + \tet \big)} \left[ \sqrt{\big(\sqrtn \la \tet + \del \big)^2 + 4n\del \la} + \big(\sqrtn \la \tet - \del \big) \right].
\end{equation}
It is easy to see that
\[
\lim_{n \to \infty} r_1 = \dfrac{1}{2} \left[ \sqrt{\tet^2 + 4 \del /\la} - \tet \right] = \gamo,
\]
and
\[
\lim_{n \to \infty} r_2 = \dfrac{1}{2} \left[ \sqrt{\tet^2 + 4 \del /\la} + \tet \right] = \gamt,
\]
in which $\gamo$ and $\gamt$ are given in \eqref{eq:gam1} and \eqref{eq:gam2}, respectively.  Thus,
\[
\lim_{n \to \infty} \bn = \dfrac{1}{\gamo + \gamt} \ln \left( \dfrac{\gamt^2}{\gamo^2} \right) = \dfrac{2}{\gamo + \gamt} \ln \left( \dfrac{\gamt}{\gamo} \right) = \bD,
\]
as expected, and the rate of convergence is of order $\mO \big( n^{-1/2} \big)$. Indeed,
\begin{align}\notag
|b_n - b_D|
&\le \left|\dfrac{1}{r_1 + r_2} - \dfrac{1}{\gamo + \gamt} \right| \times
\left| \, \ln \left( \dfrac{r_2^2}{r_1^2} \right) + \ln \left( \dfrac{\sqrtn - r_2}{\sqrtn + r_1} \right) \right| \\ \notag
&\quad + \dfrac{1}{\gamo + \gamt} \times
\left| \, \ln \left( \dfrac{r_2^2 \big(\sqrtn - r_2 \big)}{r_1^2 \big(\sqrtn + r_1 \big)} \right) - \ln\left(\frac{\gamt^2}{\gamo^2}\right)\right|.
\end{align}
Because $\ln(r^2_2/r^2_1)$ converges to $\ln(\gamt^2/\gamo^2)$ as $n\to\infty$, it follows that $\big| \ln(r^2_2/r^2_1) \big|$ is uniformly bounded over $n$.  Also, $\ln \Big( \frac{\sqrtn - r_2}{\sqrtn + r_1} \Big)$ is of order $\mO \big( n^{-1/2} \big)$. Hence, it suffices to estimate the following terms:
\begin{align}\notag
\left|\dfrac{1}{r_1 + r_2}-\dfrac{1}{\gamo+\gamt}\right|
\qquad \text{and} \qquad 
\left| \, \ln \left( \dfrac{r_2^2 \big(\sqrtn - r_2 \big)}{r_1^2 \big(\sqrtn + r_1 \big)} \right)-\ln\left(\frac{\gamt^2}{\gamo^2}\right) \right|.
\end{align}
Starting with the first term, there exists a constant $C>0$ such that for any $n>0$, 
\begin{align}\notag
\left|\dfrac{1}{r_1 + r_2} - \dfrac{1}{\gamo+\gamt} \right|
&=
\la \left| \frac{\sqrtn + \tet}{\sqrtn \sqrt{(\la\tet + \del/\sqrtn \,)^2 + 4 \la \del}}-\frac{1}{\sqrt{(\la\tet)^2+4\la\del}} \right| \\ \notag
&\le
\la \left| \frac{1}{\sqrt{(\la\tet+\del/\sqrtn \,)^2+4\la\del}}-\frac{1}{\sqrt{(\la\tet)^2+4\la\del}}\right| \\ \notag
&\qquad+\frac{1}{\sqrtn} \cdot \frac{\la\tet}{\sqrt{(\la\tet+\del/\sqrtn \,)^2 + 4\la\del}} \\ \notag
&\le \frac{C}{\sqrtn} \, .
\end{align}
Next, 
\begin{align}\notag
&\left| \, \ln \left( \dfrac{r_2^2\big(\sqrtn - r_2 \big)}{r_1^2 \big(\sqrtn + r_1 \big)} \right) - \ln\left(\frac{\gamt^2}{\gamo^2}\right) \right|
\le 
\left| \, \ln\left(\frac{\sqrtn-r_2}{\sqrtn + r_1}\right) \right| + 2\left| \, \ln\left(\frac{r_1}{\gamo}\right) \right| + 2 \left| \, \ln\left(\frac{r_2}{\gamt}\right) \right|.
\end{align}
The first term on the right side is of order $\mO \big( n^{-1/2} \big)$. The estimations for the last two terms are similar; hence, we provide details only for the last one.
\begin{align}\notag
\left| \, \ln\left(\frac{r_2}{\gamt}\right) \right|
&= \left| \,  \ln\left(\frac{\sqrtn}{\sqrtn+\tet} \cdot \frac{\sqrt{(\la\tet+\del/\sqrtn \,)^2+4\del\la}+\la\tet-\del/\sqrtn}{\sqrt{(\la\tet)^2+4\del\la}+\la\tet}\right) \right| \\ \notag
&\le   \ln\left(1+\frac{\tet}{\sqrtn}\right)
+
\left| \, \ln\left(\frac{\sqrt{(\la\tet+\del/\sqrtn \,)^2+4\del\la}+\la\tet-\del/\sqrtn}{\sqrt{(\la\tet)^2+4\del\la}+\la\tet}\right) \right|.
\end{align}
The right side is bounded from above by $C\ln(1 + C/\sqrtn \,)$, for some positive constant $C$, independent of $n$, and this bound is of order $\mO \big( n^{-1/2} \big)$.

\appendix

\section{$\Gn$ evaluated at $\VD$ on $[0, \bD]$}\label{app:A}

\setcounter{equation}{0}
\renewcommand{\theequation}
{A.\arabic{equation}}

In this appendix, we present the calculations that inspired Proposition \ref{prop:lower_bnd}.  Recall that
\begin{align}\notag
&\Fn \big(x, u(x), u_x(x), u(\cdot) \big) = \min \Big\{\Gn \big(x, u(x), u_x(x), u(\cdot)\big), \, u_x(x) - 1 \Big\},
\end{align}
in which the operator $\Gn$ is defined in \eqref{eq:Gn_def}.  We now evaluate $\Gn$ at $\VD$ for $0 \le x \le \bD$.
\begin{align}\label{eq:Gn}
& \Gn \big( x, \VD(x), \VD'(x), \VD(\cdot) \big) \\ \notag 
& =  \la \left\{\int_0^\infty \left[ n \left(\VD(x) - \VD\Big(x - \frac{t}{\sqrtn}\Big) \right) - \sqrtn \, \E Y \, \VD'(x) + \left(\dfrac{\del}{\la} \, \VD(x) - \tet \E Y \VD'(x) \right) \right]dF_Y(t)\right.\\\notag
&\qquad \; \left. + \int_{\sqrtn x}^\infty  n \VD \Big(x - \frac{t}{\sqrtn}\Big)dF_Y(t)\right\},
\end{align}
in which we extend $\VD(x)$ to $x < 0$ via the first expression in \eqref{eq:VD}.  Note that $\VD(x) < 0$ for $x < 0$, which implies that the second integral above is non-positive.  In the first integral, we write $\VD(x) = \big(e^{\gamo x} - e^{-\gamt x} \big)/C$, in which $C$ equals
\[
C = \big( \gamo + \gamt \big) \left( \dfrac{\gamt}{\gamo} \right)^{\frac{\gamo - \gamt}{\gamo + \gamt}}.
\]
The first integral is linear in $\VD$ and, hence, equals the difference of two integrals:\ one with $e^{\gamo x}/C$ replacing $\VD(x)$, and the second with $e^{-\gamt x}/C$ replacing $\VD(x)$.  We obtain the second of these integrals from the first one by substituting $- \gamt$ for $\gamo$; thus, we show the details only for $\gamo$.
\begin{align}
\notag
&\frac{n}{C} \int_0^\infty \left[e^{\gamo x} - e^{\gamo \big(x - \frac{t}{\sqrtn} \big)} - \frac{1}{\sqrtn} \, \E Y\gamo e^{\gamo x} + \dfrac{1}{n}\left(\dfrac{\del}{\la} \, e^{\gamo x} - \tet \E Y \gamo e^{\gamo x} \right)\right]dF_Y(t) \\
\notag
&\quad = - \, \frac{n}{C} \, e^{\gamo x} \int_0^\infty \left[e^{-\gamo t/\sqrtn} - \left(1 - \frac{\gamo t}{\sqrtn} + \frac{\gamo^2t^2}{2n} \right)\right]dF_Y(t)\\
\notag
&\quad = \frac{\gamo^3}{2C\sqrtn} \, e^{\gamo x}\int_0^1(1-\ome)^2\, \E\Big[Y^3e^{\frac{-\gamo \ome}{\sqrtn}Y} \Big]d\ome,
\end{align}
in which the first and second equalities, respectively, follow from the identities
\begin{align}\notag
\frac{\del}{\la} - \tet \E Y \gamo = \frac{\gamo^2 \, \E \big( Y^2 \big)}{2},
\end{align}
and 
\begin{align}\label{eq:ex}
e^x = 1 + x + \dfrac{x^2}{2!} + \cdots + \dfrac{x^n}{n!} + \dfrac{x^{n+1}}{n!} \int_0^1 (1 - \ome)^n e^{\ome x} d\ome.
\end{align}
Similar analysis for $e^{-\gamt x}/C$ yields that the first integral on the right side of \eqref{eq:Gn} equals
\begin{align}\label{eq:Gn1}
\frac{\gamo^3}{2C\sqrtn} \, e^{\gamo x}\int_0^1 (1-\ome)^2 \, \E\Big[Y^3e^{\frac{-\gamo \ome}{\sqrtn}Y} \Big] d\ome + \frac{\gamt^3}{2C\sqrtn} \, e^{-\gamt x} \int_0^1 (1-\ome)^2 \, \E\Big[Y^3e^{\frac{\gamt \ome}{\sqrtn}Y} \Big] d\ome.
\end{align}
Because of the bound in \eqref{eq:Y3_bnd}, we can rewrite and bound the first integral in \eqref{eq:Gn} on $[0, \bD]$ as follows:\ for $n \ge N$, in which $N$ is such that inequality \eqref{eq:1} holds,
\begin{align}
&\frac{1}{2C\sqrtn} \, \int_0^1 (1 - \ome)^2 \left\{ \gamo^3 e^{\gamo x} \E\Big(Y^3e^{\frac{- \gamo \ome}{\sqrtn}Y} \Big) + \gamt^3 e^{-\gamt x} \E\Big(Y^3e^{\frac{\gamt \ome}{\sqrtn}Y} \Big) \right\} d\ome \notag \\
&\le \frac{1}{2C\sqrtn} \, \int_0^1 (1 - \ome)^2 \left\{ \gamo^3 e^{\gamo \bD} \E\big(Y^3 \big) + \gamt^3 \, \E\Big(Y^3e^{\frac{\gamt}{\sqrtn}Y} \Big) \right\} d\ome \notag \\
&= \frac{1}{6C\sqrtn} \left\{ \gamo^3 e^{\gamo \bD} \E\big(Y^3 \big) + \gamt^3 \, \E\Big(Y^3e^{\frac{\gamt}{\sqrt{N}}Y} \Big) \right\} =: \dfrac{A}{\sqrtn} \, .  \label{eq:Gn1_bnd}
\end{align}
Thus, for $n \ge N$ and $x \in [0, \bD]$, we have
\begin{align}
&\Gn \big( x, \VD(x), \VD'(x), \VD(\cdot) \big) \le \dfrac{\la A}{\sqrtn} \, .
\label{eq:Gn_bnd}
\end{align}

\vspace{5pt}

\noindent{\bf Acknowledgement.} We thank an anonymous AE and two referees for their suggestions, which helped us to improve our paper.

\footnotesize
\bibliographystyle{abbrv} 
\bibliography{refs} 

\begin{figure}[p]
  \centering
  \includegraphics[width=.45\textwidth]{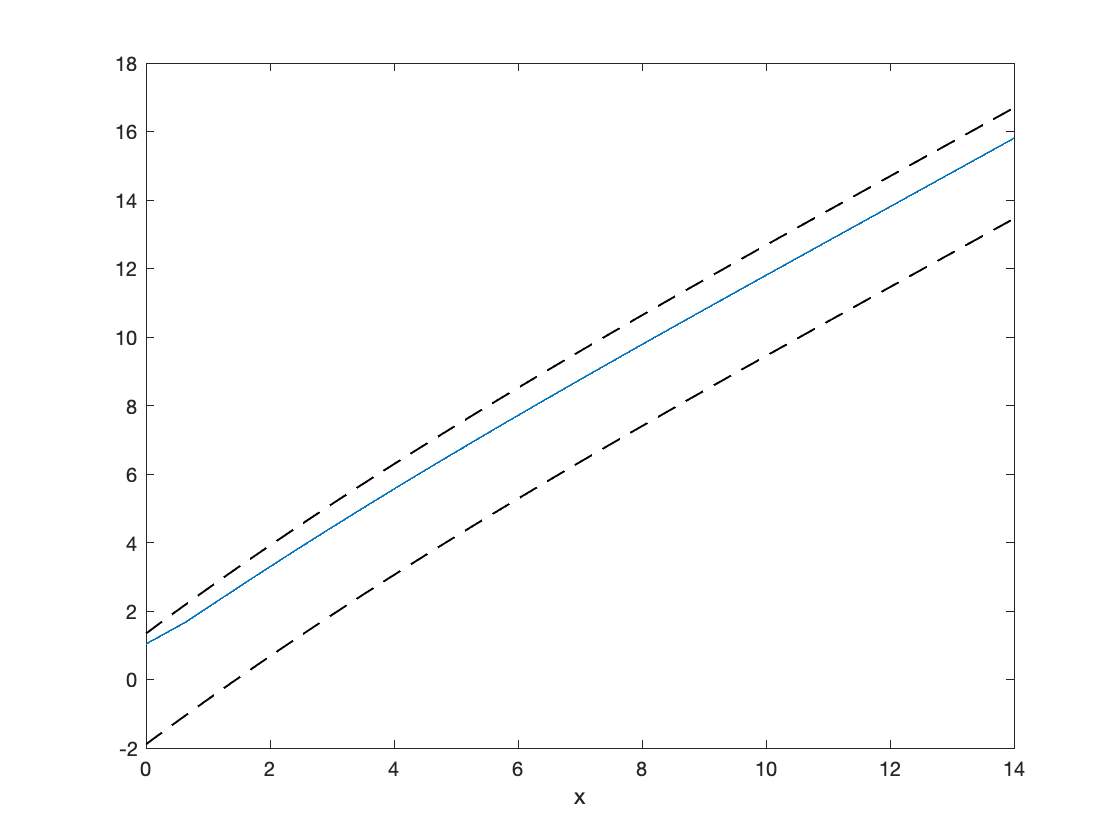}
  \hspace{1cm}
  \includegraphics[width=.45\textwidth]{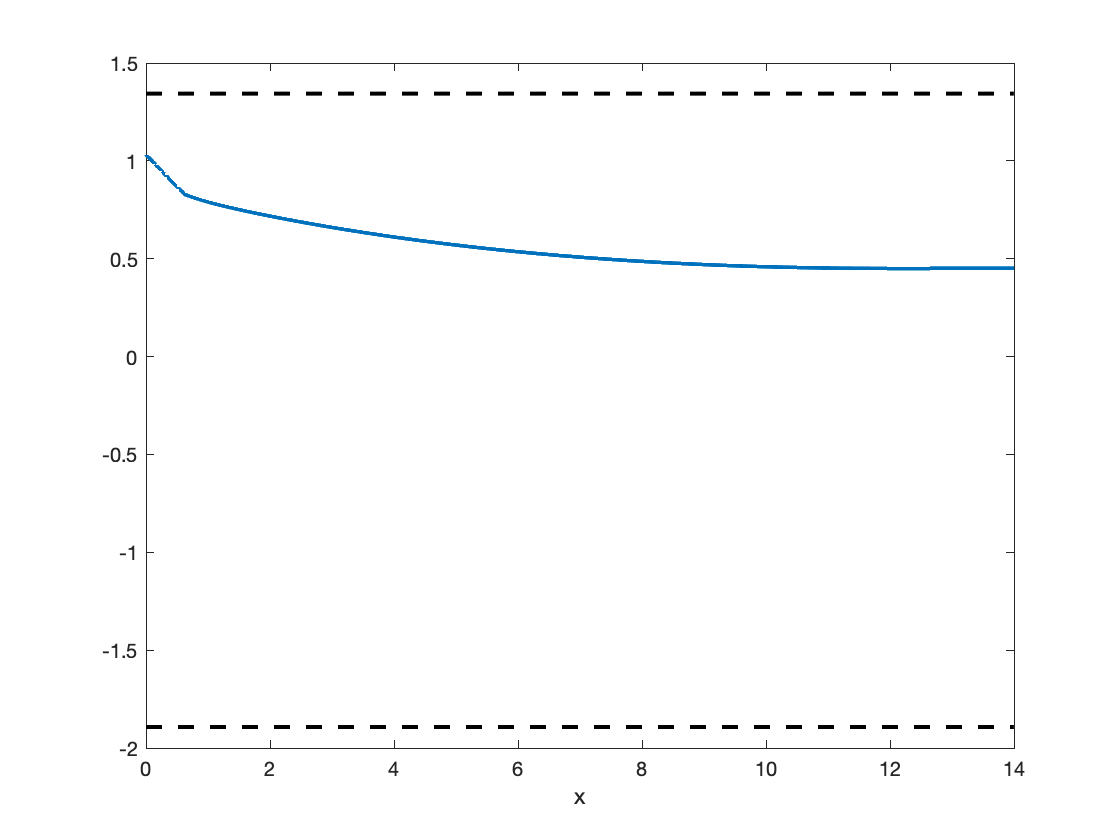}

\caption*{$n=4$}

  \includegraphics[width=.45\textwidth]{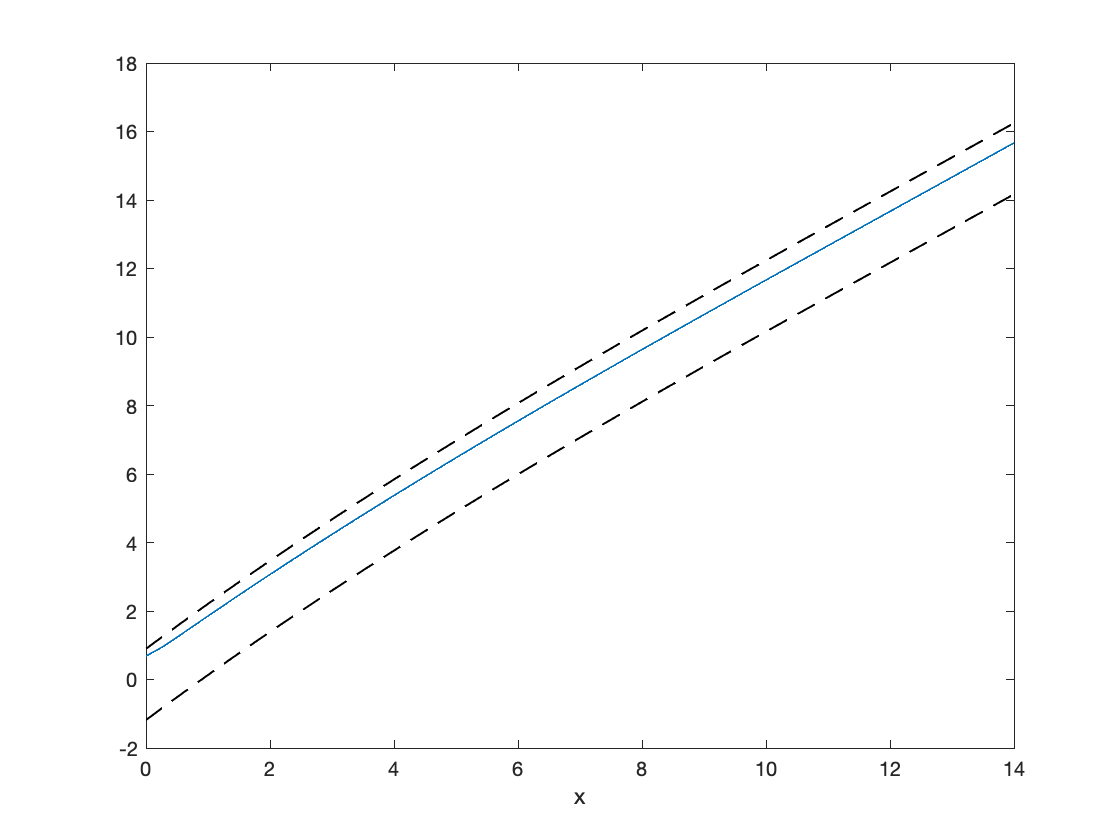}
    \hspace{1cm}
  \includegraphics[width=.45\textwidth]{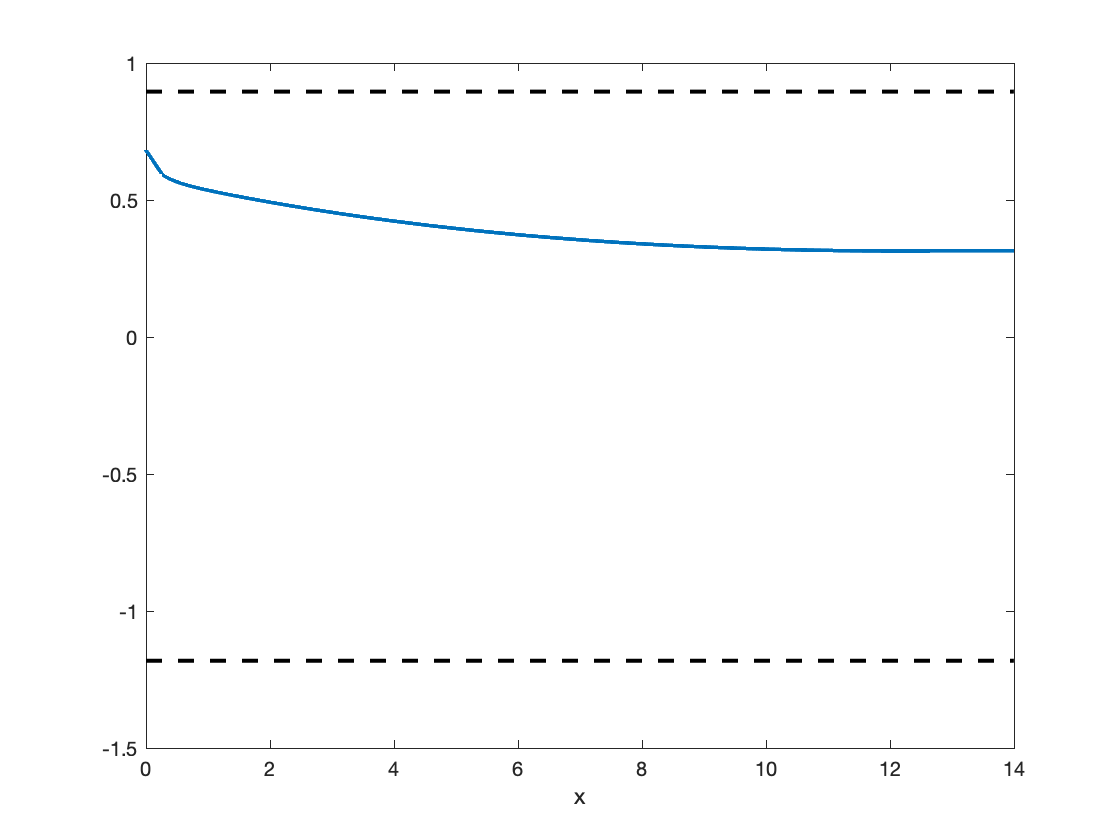}

\caption*{$n=9$}

  \includegraphics[width=.45\textwidth]{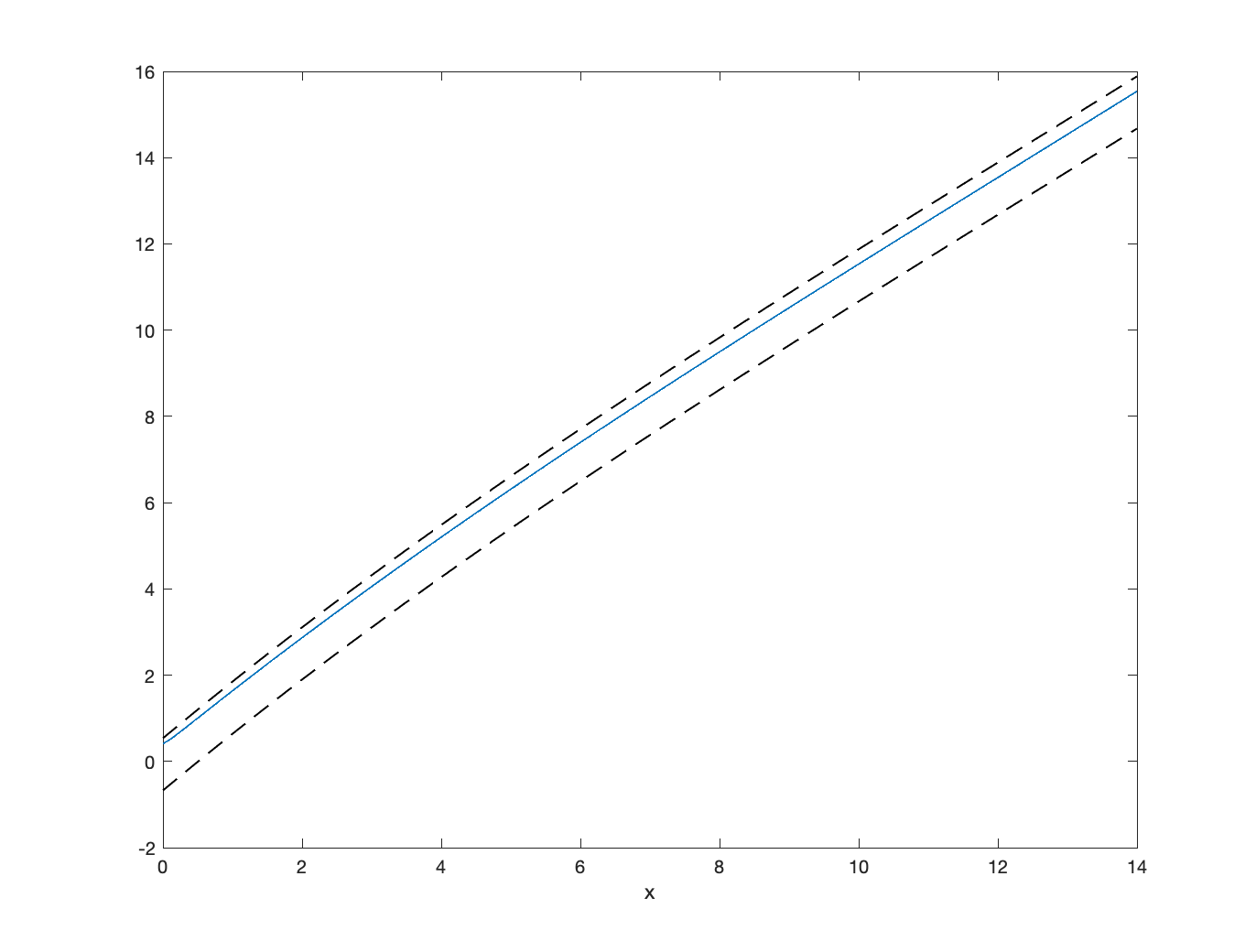}
  \hspace{1cm}
  \includegraphics[width=.45\textwidth]{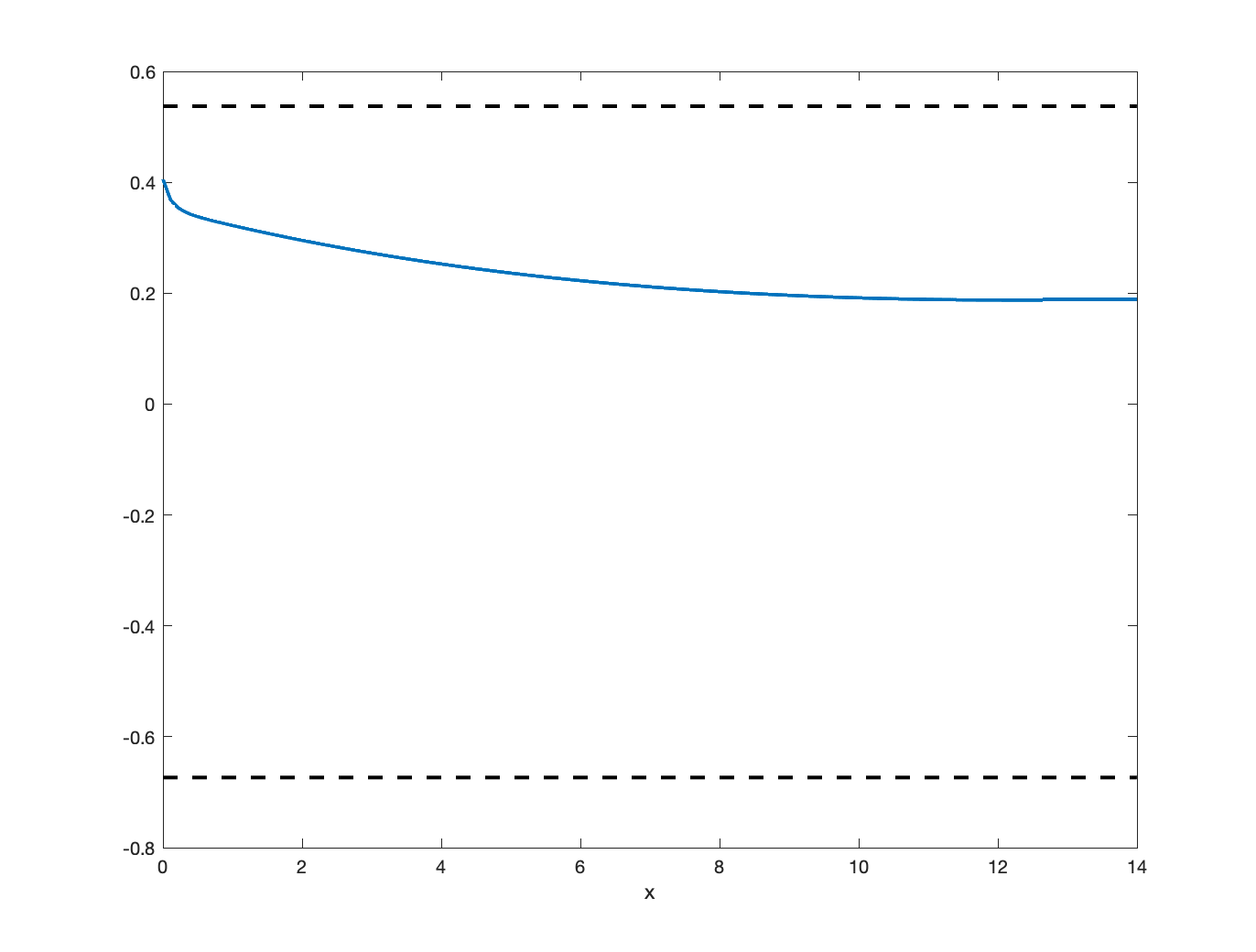}

\caption*{$n=25$}

  \caption{For Example \ref{ex:gamma} the three graphs on the left-hand side are of $V_n$ (solid line) and $V_D+\frac{p}{\sqrtn}$ and $V_D-\frac{q}{\sqrtn}$ (dashed). On the right-hand side the function $V_n-V_D$ is in solid and the dashed boundaries are $\frac{p}{\sqrtn}$ and $-\frac{q}{\sqrtn}$. }\label{fig1}
\end{figure}

\end{document}